\documentclass[reqno]{amsart}
\usepackage{amsmath, amsthm, amscd, amssymb, amsfonts, amsbsy}
\usepackage{latexsym, color, enumerate}
\usepackage{mathrsfs}
\usepackage{pxfonts}
\usepackage{bbm}
\usepackage[dvipsnames]{xcolor}
\usepackage{url}

\theoremstyle{plain}
\newtheorem{theorem}[equation]{Theorem}
\newtheorem{lemma}[equation]{Lemma}
\newtheorem{corollary}[equation]{Corollary}
\newtheorem{proposition}[equation]{Proposition}

\theoremstyle{definition}
\newtheorem{definition}[equation]{Definition}

\newtheorem{example}[equation]{Example}
\newtheorem{counterexample}[equation]{Counterexample}

\theoremstyle{remark}
\newtheorem{remark}[equation]{Remark}

\numberwithin{equation}{section}

\def\dVert{\,\,\text{--}\kern-.46em\|}

\begin{document}

\title[The Gundy--Stein decomposition with explicit constants]
{The Gundy--Stein decomposition with explicit constants}

\author[M. Hormozi]{Mahdi Hormozi}
\address[M. Hormozi]{Beijing Institute of Mathematical Sciences and Applications (BIMSA), Beijing 101408, China}
\email{hormozi@bimsa.cn}

\author[J.-X. Zhu]{Jie-Xiang Zhu}
\address[J.-X. Zhu]{Department of Mathematics, Shanghai Normal University, Shanghai 200234, China}
\email{zhujx@shnu.edu.cn}

\subjclass[2020]{60G44; 60G46; 46E30}

\keywords{ Gundy--Stein decomposition; martingale transform; $\mathrm{BMO}$ martingale; John--Nirenberg inequality}

\begin{abstract}
Let $(\mathcal F_n)_{n\ge 1}$ be a filtration and let $f\ge0$ belong to $L^1(\mathcal F_\infty)$.
For the martingale $f_n=\mathbb E[f\mid \mathcal F_n]$ and each $\lambda>0$ we prove a Gundy--Stein decomposition
\[
 f=g+h+k
\]
with explicit numerical constants. In the positive closed case the three parts satisfy explicit bounds, and the bounded part is bounded above by $\lambda$. We also prove a one-parameter form for the bounded part and two-point sharpness results, including a joint sharpness statement for arbitrary decompositions under the condition $0\le k\le \lambda$. We also obtain an exact four-term refinement of the decomposition, separating the bounded term into a stopped part and a conditional expectation term. As applications we obtain an explicit weak-type $(1,1)$ estimate for truncated martingale multipliers and a John--Nirenberg inequality for martingale $\mathrm{BMO}$ on atomic $\alpha$-regular filtrations.

\end{abstract}

\maketitle

\section{Introduction}

Gundy's decomposition is the martingale analogue of the Calder\'on--Zygmund decomposition. It was introduced by Gundy in his study of weak-type inequalities for martingales and was incorporated by Stein into martingale Littlewood--Paley theory; see \cite{Gundy1968,SteinLPS}. Closely related arguments or consequences appear in Davis \cite{Davis1969}, Tsuchikura \cite{Tsuchikura1968}, B\'aez-Duarte \cite{BaezDuarte1971}, and Schipp \cite{Schipp1976}; for a non-commutative analogue see Parcet and Randrianantoanina \cite{ParcetRandrianantoanina2006}. For martingale $\mathrm{BMO}$ and John--Nirenberg type inequalities we also refer to Garsia \cite[Ch.~III]{Garsia1973} and Kazamaki \cite{Kazamaki1994}. Throughout the paper $(\Omega,\mathcal F,\mathbb P)$ is a complete probability space endowed with a discrete filtration $(\mathcal F_n)_{n\ge1}$, and
\[
 \mathcal F_\infty:=\sigma\Bigl(\bigcup_{n\ge1}\mathcal F_n\Bigr).
\]
For any sub-$\sigma$-field $\mathcal F'\subseteq\mathcal F$, we denote by $L^p(\mathcal F')$ the subspace of $L^p(\Omega,\mathbb P)$ consisting of $\mathcal F'$-measurable functions.

In this paper we work with terminal-value martingales
\[
 f_n=\mathbb E[f\mid \mathcal F_n],
 \qquad
 f\in L^1(\mathcal F_\infty).
\]
Given $f\in L^1(\mathcal F_\infty)$ and $\lambda>0$, the Gundy--Stein decomposition seeks a splitting
\[
 f=g+h+k,
\]
in which the martingale associated with $g$ is localized on a set of small probability, the martingale differences of $h$ are absolutely summable, and $k$ is bounded by a multiple of $\lambda$. This decomposition is one of the standard tools behind weak-type $(1,1)$ estimates for martingale transforms and square functions.

In this paper we give a direct proof with explicit constants. The proof uses only optional stopping and elementary stopping-time constructions; no maximal-inequality argument is used.
The modification of the classical proof is slight: the second stopping time is chosen so that, in the positive case, the conditional expectation term disappears. This yields $0\le k\le \lambda$ for the bounded part. We also prove a one-parameter form for the bounded part, two-point sharpness examples, and a global two-point theorem showing that, under the condition $0\le k\le\lambda$, the coefficients in the localized and absolutely summable pieces cannot be improved simultaneously for arbitrary decompositions. The two applications are an explicit weak-type $(1,1)$ estimate for truncated martingale multipliers and a John--Nirenberg inequality for martingale $\mathrm{BMO}$ on atomic $\alpha$-regular filtrations.

We write $\mathbb E_n:=\mathbb E[\cdot\mid\mathcal F_n]$ and set $\mathbb E_0\equiv0$. Our main result is the following.

\begin{theorem}
\label{thm:stein-gundy}
Let $f\ge0$ belong to $L^1(\mathcal F_\infty)$ and let $\lambda>0$. Then one can write
\[
 f=g+h+k
\]
with $g,h,k\in L^1(\mathcal F_\infty)$ such that

\medskip
\noindent\textup{(a)}
\[
\mathbb P\Bigl(\sup_{n\ge1}|\mathbb E_n[g]|>0\Bigr)
\le \frac{\|f\|_1}{\lambda},
\qquad
\|g\|_1\le2\|f\|_1.
\]

\noindent\textup{(b)}
\[
\Bigl\|\sum_{n=1}^{\infty}|\mathbb E_n[h]-\mathbb E_{n-1}[h]|\Bigr\|_1
\le2\|f\|_1.
\]
In particular, $\|h\|_1\le2\|f\|_1$.

\noindent\textup{(c)}
\[
0\le k\le \lambda\quad\text{a.s.},
\qquad
\|k\|_1\le \|f\|_1,
\qquad
\|k\|_2^2\le \lambda\|f\|_1.
\]
\end{theorem}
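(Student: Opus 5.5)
The plan is to build all three pieces from one stopping time and optional stopping. Put $f_0:=0$, $f_\infty:=f$, and
\[
 T:=\inf\{n\ge1:\ f_n>\lambda\}\qquad(\inf\emptyset=\infty).
\]
Optional stopping at $T\wedge N$, letting $N\to\infty$, gives $\lambda\,\mathbb P(T<\infty)\le\mathbb E[f_T\mathbf 1_{T<\infty}]=\mathbb E[f\mathbf 1_{T<\infty}]\le\|f\|_1$, with no maximal inequality. On $\{T=\infty\}$ all $f_n\le\lambda$, so by martingale convergence $f=\lim f_n\le\lambda$ there.

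The localized part will be $g:=f-f_T=(f-f_T)\mathbf 1_{T<\infty}$. Since $\mathbb E_n[g]=f_n-f_{T\wedge n}$ vanishes on $\{T>n\}$, we get $\{\sup_n|\mathbb E_n g|>0\}\subseteq\{T<\infty\}$. Moreover $\|g\|_1\le\mathbb E[f\mathbf 1_{T<\infty}]+\mathbb E[f_T\mathbf 1_{T<\infty}]\le 2\|f\|_1$ by positivity, which gives (a).

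It remains to split $f_T=h+k$. The natural candidate for the bounded part is the value just before crossing,
\[
 k_0:=f_{T-1}=\sum_{m\ge1}f_{m-1}\mathbf 1_{\{T=m\}}+f\,\mathbf 1_{\{T=\infty\}}.
\]
It satisfies $0\le k_0\le\lambda$ automatically: $f_{m-1}\le\lambda$ on $\{T=m\}$, and $f\le\lambda$ on $\{T=\infty\}$. The jump $f_T-f_{T-1}=\sum_m w_m$ has terms $w_m:=d_m\mathbf 1_{\{T=m\}}$, where $d_m:=f_m-f_{m-1}$. Each $w_m$ is $\mathcal F_m$-measurable and nonnegative, since $f_T>\lambda\ge f_{T-1}$. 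Hence
\[
 \sum_m\mathbb E|w_m|=\mathbb E[f_T\mathbf 1_{T<\infty}]-\mathbb E[f_{T-1}\mathbf 1_{T<\infty}]\le\|f\|_1 .
\]
The classical route sets $h:=\sum_m\bigl(w_m-\mathbb E_{m-1}w_m\bigr)$. Its martingale differences are exactly $w_m-\mathbb E_{m-1}w_m$, so the $L^1$ norm of their absolute sum is at most $2\sum_m\mathbb E|w_m|\le2\|f\|_1$, which is (b). The leftover predictable compensator $\sum_m\mathbb E_{m-1}w_m$ would then have to be absorbed into $k$.

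The main obstacle is this compensator: it is nonnegative, but it destroys $k\le\lambda$. Following the remark in the introduction, I would use a second stopping time chosen so that in the positive case the compensator is already accounted for. Concretely, I would stop one step earlier, on the predictable event where $\mathbb E_{m-1}[\mathbf 1_{\{f_m>\lambda\}}]>0$. Equivalently, I would define $\sigma\le T$ as the first $n$ at which the conditional probability of crossing at the next step is positive. I would then rebuild $g,h,k$ with $\sigma$ in place of $T$ and $T-1$. The aim is that on $\{n<\sigma\}$ the crossing term $w_{n+1}$ has zero conditional expectation, so that $h$ becomes a genuine sum of martingale differences supported at time $\sigma$ with no compensator. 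The bounded part $k$ is then a value $f_{\sigma'}$ with $f_{\sigma'}\le\lambda$. What I would verify first is that $\mathbb P(\sigma<\infty)$ is still at most $\|f\|_1/\lambda$; I expect this to follow from optional stopping applied to $\mathbb E_{\sigma}[f_{\sigma+1}]$. If this step fails, I would fall back on absorbing $\sum_m\mathbb E_{m-1}w_m$ into $g$ on $\{T<\infty\}$, because that compensator lives on the event where $\mathbb E_n g$ may be nonzero.

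Finally, (c) follows from $0\le k\le\lambda$: we have $\|k\|_1\le\mathbb E[f_{\sigma'}]\le\|f\|_1$ by optional stopping, and then $\|k\|_2^2\le\lambda\|k\|_1\le\lambda\|f\|_1$. The bound $\|h\|_1\le2\|f\|_1$ in (b) is immediate from the absolute-sum bound.
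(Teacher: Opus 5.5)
Your second stopping time is the paper's $t=r\wedge s$ (its $r$ is your $T$). Up to null sets, on $\{T>m\}$ one has $\mathbb E_m[w_{m+1}]>0$ exactly when $\mathbb P(f_{m+1}>\lambda\mid\mathcal F_m)>0$, so $\sigma=T\wedge s$ with $s:=\inf\{m\ge1:\Lambda_m>0\}$ and $\Lambda_m:=\sum_{j\le m}\mathbb E_j[w_{j+1}]$. With this choice the compensator does vanish, $k=f_{s\wedge(T-1)}\in[0,\lambda]$, and (b), (c) and $\|g\|_1\le2\|f\|_1$ go through.

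The gap is exactly the step you flagged, and it cannot be filled: $\mathbb P(\sigma<\infty)\le\|f\|_1/\lambda$ is false. The event $\{\sigma=n\}$ only says that a crossing at time $n+1$ has positive conditional probability. This gives no lower bound on $f_\sigma$, so optional stopping has nothing to act on. Take $\Omega=E\sqcup E^c$ with $\mathbb P(E)=\tfrac14$, $\mathcal F_1$ trivial, $\mathcal F_n=\{\varnothing,E,E^c,\Omega\}$ for $n\ge2$, $\lambda=1$ and $f=2\cdot\mathbf 1_E$. Then $\|f\|_1=f_1=\tfrac12$, with $T=2$ on $E$ and $T=\infty$ on $E^c$. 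But $\mathbb P(f_2>1\mid\mathcal F_1)=\tfrac14$ on all of $\Omega$, so $\sigma\equiv1$. Then $g=f-f_1=\tfrac32\mathbf 1_E-\tfrac12\mathbf 1_{E^c}$ gives $\mathbb P(\sup_n|\mathbb E_n g|>0)=1>\tfrac12=\|f\|_1/\lambda$. In general $\sigma<T$ a.s. on $\{2\le T<\infty\}$. So this $\sigma$ never lets the martingale cross after time $1$, and the whole overshoot problem is moved into $g$, on an event whose probability is not controlled.

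Following the paper does not rescue this step. The paper derives $\{t<\infty\}=\{r<\infty\}$ from ``$r(\omega)=\infty$ implies $\varepsilon_n(\omega)=0$ for all $n$, hence $\Lambda_m(\omega)=0$''. But $\Lambda_m$ is a sum of conditional expectations and depends on $\varepsilon_{k+1}$ (your $w_{k+1}$) over the whole atom. In the example, $\Lambda_1\equiv\tfrac38$ on $E^c$, where $r=\infty$.

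Your fallback fails for the same reason. $\mathbb E_{m-1}[w_m]$ is positive on entire $\mathcal F_{m-1}$-atoms that contain non-crossing paths, and a term supported on $\{T<\infty\}$ still has nonzero conditional expectations off that set. In the example, moving the compensator into $g$ gives $g\equiv\tfrac38$; moving only its restriction to $\{T<\infty\}$ gives $g=\tfrac38\mathbf 1_E$. Either way $\mathbb E_1[g]\ne0$ everywhere.

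What the stopping-time method does yield is the classical trade-off. Take $s_\theta=\inf\{m:\Lambda_m>\theta\lambda\}$ with $\theta>0$. Markov's inequality together with $\mathbb E[\sup_m\Lambda_m]\le\sum_n\mathbb E[w_n]\le\|f\|_1$ gives $\mathbb P(\sup_n|\mathbb E_n g|>0)\le(1+\theta^{-1})\|f\|_1/\lambda$, with $0\le k\le(1+\theta)\lambda$.

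The example refutes the route, not the theorem: there the classical choice $g=0$, $h=w_2-\mathbb E_1[w_2]$, $k=f-h\le\tfrac78$ works. Still, getting the coefficient $1$ in (a) together with $0\le k\le\lambda$ needs an idea that neither your proposal nor the paper's proof supplies.
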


For general $f\in L^1(\mathcal F_\infty)$ one applies Theorem~\ref{thm:stein-gundy} to $f_+$ and $f_-$ separately. This yields the corresponding decomposition with doubled constants; see Corollary~\ref{cor:stein-gundy}.

The bounded term also admits a one-parameter variant: if one allows a conditional expectation term of size $\theta\lambda$, then one obtains a family of decompositions with bounded part $k_\theta$ satisfying $\|k_\theta\|_\infty\le (1+\theta)\lambda$. The sharpness discussion is divided into two parts. First, two-point examples show that the constants in Theorem~\ref{thm:stein-gundy} are attained by the construction, and they also give the parametric $L^\infty$ bound for $0\le\theta<1$ (with asymptotic sharpness at $\theta=1$). Second,
Second, Proposition~\ref{prop:global-joint-sharpness} and Corollary~\ref{cor:global-joint-sharpness} give a global two-point sharpness statement for arbitrary decompositions.
The paper is organized as follows. Section~\ref{sec:proof} contains the proof of Theorem~\ref{thm:stein-gundy}, the extension to general $f$, and the counterexample showing that the assumption $f\in L^1(\mathcal F_\infty)$ is needed in part~\textup{(c)}. Section~\ref{sec:sharpness} treats the parametric variant and sharpness. Sections~\ref{sec:multipliers} and \ref{sec:SGJN} contain the two applications.

\section{Proof of Theorem~\ref{thm:stein-gundy}}
\label{sec:proof}

For $f\in L^1(\Omega,\mathbb P)$, set
\[
 f_n:=\mathbb E_n[f],\qquad n\in\mathbb N_0.
\]
Then $(f_n)_{n\ge1}$ is the martingale associated with $f$. Since $f\in L^1(\mathcal F_\infty)$, the martingale convergence theorem gives $f_n\to f$ almost surely and in $L^1$.

A stopping time for $(\mathcal F_n)_{n\ge1}$ is a map $\tau:\Omega\to\{1,2,\dots\}\cup\{\infty\}$ such that $\{\tau=n\}\in\mathcal F_n$ for every $n\ge1$.

\begin{lemma}
\label{lem:optional}
Let $f\in L^1(\mathcal F_\infty)$, let $f_n:=\mathbb E_n[f]$ for $n\ge1$, and write $f_\infty:=f$. If $\tau$ is a stopping time, then $f_\tau\in L^1$ and
\[
 \mathbb E[f_\tau]=\mathbb E[f].
\]
\end{lemma}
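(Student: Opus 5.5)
The plan is to reduce the claim to the identity $f_\tau=\mathbb E[f\mid\mathcal F_\tau]$, which gives both integrability and the expectation identity at once. Here $\mathcal F_\tau$ is the usual stopped $\sigma$-field $\{A\in\mathcal F_\infty: A\cap\{\tau=n\}\in\mathcal F_n \text{ for all } n\}$. We take $\{\tau=\infty\}$ together with $\mathcal F_\infty$, which is consistent with the convention $f_\infty=f$. The statement only asks for $f_\tau\in L^1$ and $\mathbb E[f_\tau]=\mathbb E[f]$, so in fact we can skip $\mathcal F_\tau$ and decompose along the level sets of $\tau$ directly.

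\emph{Step 1 (decomposition).} Write
\[
 f_\tau=\sum_{n\ge1}f_n\mathbf 1_{\{\tau=n\}}+f\,\mathbf 1_{\{\tau=\infty\}},
\]
which holds pointwise. Since $\{\tau=n\}\in\mathcal F_n$, conditional Jensen gives
\[
 |f_n|\mathbf 1_{\{\tau=n\}}\le\mathbb E_n\bigl[|f|\bigr]\mathbf 1_{\{\tau=n\}}=\mathbb E_n\bigl[|f|\mathbf 1_{\{\tau=n\}}\bigr].
\]
Taking expectations, $\mathbb E\bigl[|f_n|\mathbf 1_{\{\tau=n\}}\bigr]\le\mathbb E\bigl[|f|\mathbf 1_{\{\tau=n\}}\bigr]$. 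Summing over $n$ and adding the $\{\tau=\infty\}$ term yields $\mathbb E|f_\tau|\le\mathbb E|f|<\infty$, by monotone convergence. This proves $f_\tau\in L^1$. Measurability of $f_\tau$ is clear from the same decomposition. For the $\{\tau=\infty\}$ piece we use that $\{\tau=\infty\}$ is the complement of $\bigcup_n\{\tau=n\}$, hence lies in $\mathcal F_\infty$.

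\emph{Step 2 (expectation).} By the defining property of conditional expectation on the $\mathcal F_n$-set $\{\tau=n\}$,
\[
 \mathbb E\bigl[f_n\mathbf 1_{\{\tau=n\}}\bigr]=\mathbb E\bigl[f\mathbf 1_{\{\tau=n\}}\bigr].
\]
Summing over $n$ and adding $\mathbb E\bigl[f\mathbf 1_{\{\tau=\infty\}}\bigr]$ gives $\mathbb E[f_\tau]=\mathbb E[f]$. The interchange of sum and expectation is justified by dominated convergence, using the summable bound $\sum_n\mathbb E\bigl[|f|\mathbf 1_{\{\tau=n\}}\bigr]\le\|f\|_1$ from Step 1. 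Alternatively, split $f=f_+-f_-$ and use monotone convergence for each part.

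\emph{Main obstacle.} The only delicate point is the set $\{\tau=\infty\}$: there we must make sure that $f_\tau$ means $f$, and not some limit of $f_n$. The convention $f_\infty:=f$ in the statement handles this. The martingale convergence theorem, already noted in the paper, shows that this convention agrees with $\lim_n f_n$ almost surely, so nothing is lost. Unlike the classical optional stopping theorem, no uniform integrability or boundedness of $\tau$ is needed, because the martingale is closed by $f$. This is exactly why the per-level-set argument above works.
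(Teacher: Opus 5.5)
Your proof is correct and is essentially the paper's argument. Like the paper, you decompose $f_\tau$ along the level sets $\{\tau=n\}\in\mathcal F_n$ and $\{\tau=\infty\}$, bound $\mathbb E[|f_n|\mathbf 1_{\{\tau=n\}}]$ by $\mathbb E[|f|\mathbf 1_{\{\tau=n\}}]$ via conditional Jensen, and use $\mathbb E[f_n\mathbf 1_{\{\tau=n\}}]=\mathbb E[f\mathbf 1_{\{\tau=n\}}]$ on each piece before summing; your explicit justification of the sum--expectation interchange via the $L^1$ bound from Step 1 is a detail the paper leaves implicit.
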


\begin{proof}
Since $\{\tau=j\}\in\mathcal F_j$ for every $j\ge1$,
\[
\mathbb E\bigl[|f_j|\mathbf1_{\{\tau=j\}}\bigr]
\le
\mathbb E\bigl[\mathbb E_j[|f|] \, \mathbf1_{\{\tau=j\}}\bigr]
=
\mathbb E\bigl[|f|\mathbf1_{\{\tau=j\}}\bigr].
\]
Summing over $j$ and adding the term on $\{\tau=\infty\}$ gives
\[
 \mathbb E[ |f_\tau| ]\le \mathbb E [ |f| ]<\infty.
\]
Likewise,
\[
\mathbb E\bigl[f_j\mathbf1_{\{\tau=j\}}\bigr]
=
\mathbb E\bigl[f\mathbf1_{\{\tau=j\}}\bigr],
\qquad j\ge1,
\]
so
\[
 \mathbb E[f_\tau]
 =
 \sum_{j=1}^{\infty}\mathbb E\bigl[f_j\mathbf1_{\{\tau=j\}}\bigr]+
 \mathbb E\bigl[f\mathbf1_{\{\tau=\infty\}}\bigr]
 = \mathbb E[f].
 \qedhere
\]
\end{proof}

We now fix $f\ge0$ in $L^1(\mathcal F_\infty)$ and $\lambda>0$, and write
\[
 d f_n:=f_n-f_{n-1},\qquad n\ge1.
\]
Define the first-passage time
\[
 r:=\inf\{n\ge1:\ f_n>\lambda\},
\]
where $\inf\varnothing=\infty$. Since $f_r>\lambda$ on $\{r<\infty\}$, Lemma~\ref{lem:optional} gives
\begin{equation}
\label{eq:Pr}
 \mathbb P(r<\infty)\le \frac{\|f\|_1}{\lambda}.
\end{equation}
Define the crossing increment
\[
 \varepsilon_n:=d f_n\,\mathbf1_{\{r=n\}},\qquad n\ge1.
\]
If $r=n$, then $f_{n-1}\le\lambda<f_n$, hence $d f_n>0$ on $\{r=n\}$. Therefore
\begin{equation}
\label{eq:eps-nonneg}
 \varepsilon_n\ge0,
 \qquad
 \text{and for each $\omega$ at most one $\varepsilon_n(\omega)$ is nonzero.}
\end{equation}
Let
\[
 \Lambda_0:=0,
 \qquad
 \Lambda_m:=\sum_{k=1}^{m}\mathbb E\bigl[\varepsilon_{k+1}\mid\mathcal F_k\bigr],\qquad m\ge1,
\]
and define
\[
 s:=\inf\{m\ge1:\ \Lambda_m>0\},
 \qquad
 t:=r\wedge s.
\]
Then $s$ and $t$ are stopping times. Moreover, if $r=\infty$ then $\varepsilon_n=0$ for every $n$, hence $\Lambda_m=0$ for every $m$ and therefore $s=t=\infty$. Since always $t\le r$, it follows that
\begin{equation}
\label{eq:t-r-eq}
 \{t<\infty\}=\{r<\infty\}.
\end{equation}

\smallskip
\noindent\emph{The $g$ term.}
Set
\[
 g:=f-f_t.
\]
Since
\[
\mathbb E_n[f_t]
=
\sum_{j=1}^{n} f_j\mathbf1_{\{t=j\}} + \mathbb E_n\bigl[f\mathbf1_{\{t>n\}}\bigr]
=
\sum_{j=1}^{n} f_j\mathbf1_{\{t=j\}} + f_n\mathbf1_{\{t>n\}}
=
 f_{n\wedge t},
\]
one has
\[
 \mathbb E_n[g]=f_n-f_{n\wedge t}.
\]
If $t=\infty$, then $g=0$ and $\mathbb E_n[g]=0$ for every $n$. Thus, by \eqref{eq:Pr} and \eqref{eq:t-r-eq},
\[
 \mathbb P\Bigl(\sup_{n\ge1}|\mathbb E_n[g]|>0\Bigr)
 \le \mathbb P(t<\infty)
 = \mathbb P(r<\infty)
 \le \frac{\|f\|_1}{\lambda}.
\]
Also, by Lemma~\ref{lem:optional},
\[
 \|g\|_1
 = \mathbb E|f-f_t|
 \le \mathbb E[f]+\mathbb E[f_t]
 = 2\|f\|_1.
\]
This proves part~\textup{(a)} of Theorem~\ref{thm:stein-gundy}.

\smallskip
\noindent\emph{The splitting of the stopped martingale.}
Define
\[
 \gamma_n:=d f_n\,\mathbf1_{\{r>n\}},\qquad n\ge1.
\]
Since $t=r\wedge s$, one has $\mathbf1_{\{t\ge j\}}=\mathbf1_{\{r\ge j\}}\mathbf1_{\{s\ge j\}}$, and
\[
 d f_j\,\mathbf1_{\{r\ge j\}}=\varepsilon_j+\gamma_j.
\]
Therefore,
\begin{equation}
\label{eq:barf-sum}
 f_{n\wedge t}
 =\sum_{j=1}^{n} d f_j\,\mathbf1_{\{t\ge j\}}
 =\sum_{j=1}^{n}(\varepsilon_j+\gamma_j)\mathbf1_{\{s\ge j\}}.
\end{equation}
For $n\ge1$, set
\[
 h_n:=\sum_{j=1}^{n}(\varepsilon_j-\mathbb E_{j-1}[\varepsilon_j])\mathbf1_{\{s\ge j\}},
\]
\[
 k_n:=\sum_{j=1}^{n}(\gamma_j+\mathbb E_{j-1}[\varepsilon_j])\mathbf1_{\{s\ge j\}}.
\]
Then, by \eqref{eq:barf-sum},
\begin{equation}
\label{eq:hkbarf}
 h_n+k_n=f_{n\wedge t},\qquad n\ge1.
\end{equation}
Since
\[
 d h_n=(\varepsilon_n-\mathbb E_{n-1}[\varepsilon_n])\mathbf1_{\{s\ge n\}},
\qquad
 \mathbf1_{\{s\ge n\}}\in\mathcal F_{n-1},
\]
one has $\mathbb E_{n-1}[d h_n]=0$. Hence $(h_n)_{n\ge1}$ is a martingale, and then so is $(k_n)_{n\ge1}$ by \eqref{eq:hkbarf}.

\smallskip
\noindent\emph{The $h$ term.}

\begin{lemma}
\label{lem:psi-sum}
One has
\[
 \sum_{n=1}^{\infty}\|d h_n\|_1\le2\|f\|_1.
\]
Consequently $\sum_{n=1}^{\infty}|d h_n|<\infty$ almost surely and $h_n$ converges almost surely and in $L^1$ to
\[
 h:=\sum_{n=1}^{\infty}d h_n\in L^1(\mathcal F_\infty),
 \qquad
 \|h\|_1\le2\|f\|_1.
\]
\end{lemma}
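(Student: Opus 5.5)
We bound each $\|d h_n\|_1$ by the triangle inequality and drop the indicator:
\[
 \|d h_n\|_1\le \mathbb E\bigl[\varepsilon_n\bigr]+\mathbb E\bigl[\mathbb E_{n-1}[\varepsilon_n]\bigr]=2\,\mathbb E[\varepsilon_n],
\]
using $\varepsilon_n\ge0$ from \eqref{eq:eps-nonneg}. So it suffices to show
\[
 \sum_{n\ge1}\mathbb E[\varepsilon_n]\le\|f\|_1 .
\]
By \eqref{eq:eps-nonneg}, at most one $\varepsilon_n$ is nonzero at each point. Hence
\[
 \sum_n\varepsilon_n=d f_r\,\mathbf1_{\{r<\infty\}}=(f_r-f_{r-1})\mathbf1_{\{r<\infty\}}\le f_r\mathbf1_{\{r<\infty\}}.
\]
The last inequality uses $f_{r-1}\ge0$, which holds because $f\ge0$; for $r=1$ note $f_0=\mathbb E_0 f=0$ by convention. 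By monotone convergence and Lemma~\ref{lem:optional},
\[
 \sum_n\mathbb E[\varepsilon_n]\le\mathbb E[f_r\mathbf1_{\{r<\infty\}}]\le\mathbb E[f_r]=\mathbb E[f]=\|f\|_1 .
\]
Here we used $f_r\ge0$ everywhere, including $f_\infty=f$ on $\{r=\infty\}$. This gives $\sum_n\|d h_n\|_1\le2\|f\|_1$.

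For the consequences, Tonelli gives $\mathbb E\sum_n|d h_n|=\sum_n\|d h_n\|_1<\infty$, so $\sum_n|d h_n|<\infty$ almost surely. It follows that $h_n=\sum_{j\le n}d h_j$ converges almost surely to $h:=\sum_n d h_n$. The tails satisfy $\|h-h_n\|_1\le\sum_{j>n}\|d h_j\|_1\to0$, so the convergence also holds in $L^1$.

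Each $d h_n$ is $\mathcal F_n$-measurable, since $\varepsilon_n$ and $\mathbf1_{\{s\ge n\}}$ are. Hence $h$ is $\mathcal F_\infty$-measurable and $h\in L^1(\mathcal F_\infty)$. Finally,
\[
 \|h\|_1\le\mathbb E\sum_n|d h_n|\le2\|f\|_1 .
\]

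The only delicate point is the bound $\sum_n\mathbb E\varepsilon_n\le\|f\|_1$. It needs positivity of $f_{r-1}$ and optional stopping at the possibly infinite time $r$, and both are supplied by $f\ge0$ and Lemma~\ref{lem:optional}.
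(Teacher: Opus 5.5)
Your proof is correct and follows essentially the same route as the paper. Both use the pointwise bound $|d h_n|\le\varepsilon_n+\mathbb E_{n-1}[\varepsilon_n]$, the estimate $\sum_n\varepsilon_n\le f_r\mathbf 1_{\{r<\infty\}}$ via $f_{r-1}\ge0$, optional stopping at $r$, and Tonelli. The only difference is that you drop the indicator $\mathbf 1_{\{s\ge n\}}$ before taking expectations rather than using its $\mathcal F_{n-1}$-measurability, which is equally valid.

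There is one small slip at $n=1$. The convention $\mathbb E_0\equiv0$ gives $\mathbb E\bigl[\mathbb E_0[\varepsilon_1]\bigr]=0$, so your equality with $2\,\mathbb E[\varepsilon_1]$ should be an inequality. The inequality is all you need, and it is why the paper treats $n=1$ separately.
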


\begin{proof}
By \eqref{eq:eps-nonneg},
\[
 |d h_n|
 \le \varepsilon_n \mathbf1_{\{s\ge n\}}+\mathbb E_{n-1}[\varepsilon_n]\mathbf1_{\{s\ge n\}}.
\]
For $n\ge2$, since $\mathbf1_{\{s\ge n\}}\in\mathcal F_{n-1}$,
\[
\mathbb E\bigl[\mathbb E_{n-1}[\varepsilon_n]\mathbf1_{\{s\ge n\}}\bigr]
=
\mathbb E\bigl[\varepsilon_n\mathbf1_{\{s\ge n\}}\bigr],
\]
so
\[
 \|d h_n\|_1\le 2\,\mathbb E[\varepsilon_n\mathbf1_{\{s\ge n\}}]\le2\,\mathbb E[\varepsilon_n],
 \qquad n\ge2.
\]
For $n=1$, one has $d h_1=\varepsilon_1$ because $\mathbb E_0\equiv0$ and $\{s\ge1\}=\Omega$. Thus again
\[
 \|d h_1\|_1\le2\,\mathbb E[\varepsilon_1].
\]
Hence
\[
 \|d h_n\|_1\le2\,\mathbb E[\varepsilon_n],\qquad n\ge1.
\]
On the other hand,
\[
 \sum_{n=1}^{\infty}\varepsilon_n
 =\sum_{n=1}^{\infty}(f_n-f_{n-1})\mathbf1_{\{r=n\}}
 \le \sum_{n=1}^{\infty}f_n\mathbf1_{\{r=n\}}
 =f_r\mathbf1_{\{r<\infty\}}.
\]
Taking expectations and applying Lemma~\ref{lem:optional} with $\tau=r$, we obtain
\[
 \sum_{n=1}^{\infty}\mathbb E[\varepsilon_n]\le \mathbb E[f_r]=\mathbb E[f]=\|f\|_1.
\]
Therefore,
\[
 \sum_{n=1}^{\infty}\|d h_n\|_1\le2\sum_{n=1}^{\infty}\mathbb E[\varepsilon_n]\le2\|f\|_1.
\]
Tonelli's theorem gives $\sum_{n\ge1}|d h_n|<\infty$ almost surely, and the stated convergence in $L^1$ follows from the summability of $\sum_n\|d h_n\|_1$.
\end{proof}

Since $h_n\to h$ in $L^1$ and $(h_n)_{n\ge1}$ is a martingale, one has $h_n=\mathbb E_n[h]$ for every $n\ge1$. Hence
\[
 d h_n=\mathbb E_n[h]-\mathbb E_{n-1}[h],
\]
and part~\textup{(b)} of Theorem~\ref{thm:stein-gundy} follows from Lemma~\ref{lem:psi-sum}.

\smallskip
\noindent\emph{The $k$ term.}
Set
\[
 k:=f_t-h.
\]
Then $f=g+h+k$. Since $\mathbb E_n[f_t]=f_{n\wedge t}$ and $h_n=\mathbb E_n[h]$, we also have
\[
 k_n=\mathbb E_n[k],\qquad n\ge1.
\]

\begin{lemma}
\label{lem:k-infty}
One has $0\le k\le \lambda$ almost surely.
\end{lemma}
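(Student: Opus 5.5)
The plan is to prove the bound for the martingale $(k_n)_{n\ge1}$ pathwise and then pass to the limit. Since $k\in L^1(\mathcal F_\infty)$ (both $f_t$ and $h$ are) and $k_n=\mathbb E_n[k]$, the martingale convergence theorem gives $k_n\to k$ almost surely. It therefore suffices to show $0\le k_n\le\lambda$ almost surely for every $n\ge1$.

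\emph{Step 1: the compensator terms vanish.} I will show that
\[
\mathbb E_{j-1}[\varepsilon_j]\mathbf1_{\{s\ge j\}}=0\quad\text{a.s. for all } j\ge1.
\]
For $j=1$ this holds because $\mathbb E_0\equiv0$. For $j\ge2$, the summands of $\Lambda_m$ are nonnegative by \eqref{eq:eps-nonneg}, so $(\Lambda_m)$ is nondecreasing and nonnegative. On $\{s\ge j\}$ we have $\Lambda_{j-1}=0$. Since $\Lambda_{j-1}$ is a sum of nonnegative terms, each term vanishes there. In particular its last term, $\mathbb E[\varepsilon_j\mid\mathcal F_{j-1}]$, equals $0$ on $\{s\ge j\}$. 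This is precisely where the choice of the second stopping time $s$ is used, and it is the only delicate point; one must work with versions of conditional expectations that are nonnegative a.s. Consequently
\[
k_n=\sum_{j=1}^n \gamma_j\mathbf1_{\{s\ge j\}}=\sum_{j=1}^n d f_j\,\mathbf1_{\{r>j\}}\mathbf1_{\{s\ge j\}}.
\]

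\emph{Step 2: identify $k_n$ as a stopped value.} Since $\{r>j\}=\{j\le r-1\}$, the indicator $\mathbf1_{\{r>j\}}\mathbf1_{\{s\ge j\}}$ equals $\mathbf1_{\{j\le \rho\}}$ with $\rho:=(r-1)\wedge s\in\{0,1,2,\dots\}\cup\{\infty\}$. The sum therefore telescopes pathwise to
\[
k_n=f_{n\wedge\rho},
\]
with the convention $f_0=\mathbb E_0[f]=0$. No stopping-time property of $\rho$ is needed for this identity.

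\emph{Step 3: the bounds.} Let $m:=n\wedge\rho$. Then $m\ge0$, so $f_m\ge0$, because $f\ge0$ gives $f_j\ge0$ for every $j$ and $f_0=0$. Moreover $m\le r-1<r$, so by the definition of the first-passage time $r$ we get $f_m\le\lambda$ (trivially so when $m=0$). Hence $0\le k_n\le\lambda$ for all $n$, and letting $n\to\infty$ gives $0\le k\le\lambda$ almost surely.
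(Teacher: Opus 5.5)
Your proof is correct and takes the same route as the paper. The compensator terms vanish on $\{s\ge j\}$ because $\Lambda_{j-1}=0$ there. The remaining sum telescopes to $f$ evaluated at $s\wedge(r-1)$, which lies in $[0,\lambda]$ because it is taken before the first passage above $\lambda$. The only difference is cosmetic: you bound each $k_n=f_{n\wedge\rho}$ and pass to the almost sure limit, whereas the paper identifies the limit $k=f_{s\wedge r^-}$ directly and so has to treat the case $r=\infty$ separately via $f\le\lambda$ a.s.
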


\begin{proof}
Since $k_n\to k$ almost surely and $k_n$ is given by \eqref{eq:hkbarf}, we obtain
\begin{equation}
\label{eq:k-repr}
 k=
 \sum_{j=1}^{\infty}(\gamma_j+\mathbb E_{j-1}[\varepsilon_j])\mathbf1_{\{s\ge j\}}
 \qquad\text{a.s.}
\end{equation}
Now
\[
\sum_{j=1}^{\infty}\gamma_j\mathbf1_{\{s\ge j\}}
=
\sum_{j=1}^{\infty}d f_j\mathbf1_{\{r>j\}}\mathbf1_{\{s\ge j\}}.
\]
If $r<\infty$, set $r^-:=r-1$; if $r=\infty$, set $r^-:=\infty$. Then
\[
\sum_{j=1}^{\infty}d f_j\mathbf1_{\{r>j\}}\mathbf1_{\{s\ge j\}}
=
\sum_{j=1}^{s\wedge r^-} d f_j
=
 f_{s\wedge r^-}.
\]
If $r<\infty$, then $f_m\le\lambda$ for every $m<r$, hence $f_{s\wedge r^-}\le\lambda$. If $r=\infty$, then $f_n\le\lambda$ for every $n$, so $f\le\lambda$ almost surely and also $s=\infty$. Therefore,
\begin{equation}
\label{eq:first-piece}
 0\le \sum_{j=1}^{\infty}\gamma_j\mathbf1_{\{s\ge j\}}\le\lambda
 \qquad\text{a.s.}
\end{equation}
For the compensator term, since $\mathbb E_0\equiv0$,
\[
 \sum_{j=1}^{\infty}\mathbb E_{j-1}[\varepsilon_j]\mathbf1_{\{s\ge j\}}
 =
 \sum_{j=2}^{\infty}\mathbb E_{j-1}[\varepsilon_j]\mathbf1_{\{s\ge j\}}.
\]
If $s=m<\infty$, this equals
\[
 \sum_{j=2}^{m}\mathbb E_{j-1}[\varepsilon_j]=\Lambda_{m-1}=0,
\]
since $m$ is the first index with $\Lambda_m>0$. If $s=\infty$, then $\Lambda_m=0$ for every $m\ge1$, so the same sum is again zero. Thus
\begin{equation}
\label{eq:second-piece}
 \sum_{j=1}^{\infty}\mathbb E_{j-1}[\varepsilon_j]\mathbf1_{\{s\ge j\}}=0
 \qquad\text{a.s.}
\end{equation}
Combining \eqref{eq:k-repr}, \eqref{eq:first-piece}, and \eqref{eq:second-piece} proves the claim.
\end{proof}

\begin{lemma}
\label{lem:k-L1L2}
One has $\|k\|_1\le\|f\|_1$ and $\|k\|_2^2\le \lambda\|f\|_1$.
\end{lemma}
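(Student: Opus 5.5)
Since Lemma~\ref{lem:k-infty} gives $0\le k\le\lambda$ almost surely, the $L^2$ bound reduces to the $L^1$ bound: $\|k\|_2^2=\mathbb E[k^2]\le\lambda\,\mathbb E[k]=\lambda\|k\|_1$. So the whole lemma follows once we show $\|k\|_1=\mathbb E[k]\le\|f\|_1$.

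For the $L^1$ bound, positivity of $k$ lets us compute $\|k\|_1=\mathbb E[k]$ exactly. By definition $k=f_t-h$. Lemma~\ref{lem:optional} with $\tau=t$ gives $\mathbb E[f_t]=\mathbb E[f]=\|f\|_1$, using $f\ge0$. Since $h_n\to h$ in $L^1$ (Lemma~\ref{lem:psi-sum}) and $(h_n)$ is a martingale, $\mathbb E[h]=\lim_n\mathbb E[h_n]=\mathbb E[h_1]$. Moreover $h_1=dh_1=\varepsilon_1$, because $\mathbb E_0\equiv0$ and $\{s\ge1\}=\Omega$. Hence $\mathbb E[h]=\mathbb E[\varepsilon_1]\ge0$ by \eqref{eq:eps-nonneg}, and therefore
\[
 \|k\|_1=\mathbb E[k]=\|f\|_1-\mathbb E[\varepsilon_1]\le\|f\|_1 .
\]

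The only point needing care is the justification that $\mathbb E[h]=\mathbb E[h_1]$, which rests on the $L^1$ convergence from Lemma~\ref{lem:psi-sum}. As an alternative check, one can use \eqref{eq:k-repr} together with \eqref{eq:second-piece}, which give $k=f_{s\wedge r^-}$ (with the natural convention when $s\wedge r^-=0$, namely $k=0$ on $\{r=1\}$). Then one would bound $\mathbb E[f_{s\wedge r^-}]$ by optional stopping for the nonnegative martingale $(f_n)$. I expect the first route to be cleaner, and I would use it.
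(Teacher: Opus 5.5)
Your proposal is correct and matches the paper's proof essentially verbatim: you use $k\ge0$ to write $\|k\|_1=\mathbb E[f_t]-\mathbb E[h]$, apply Lemma~\ref{lem:optional} to get $\mathbb E[f_t]=\|f\|_1$, observe that $\mathbb E[h]=\mathbb E[h_1]=\mathbb E[\varepsilon_1]\ge0$, and deduce the $L^2$ bound from $0\le k\le\lambda$. Your alternative route through $k=f_{s\wedge r^-}$ is also sound, and your convention agrees with the paper's $f_0=\mathbb E_0[f]=0$, but you do not need it.
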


\begin{proof}
Since $k\ge0$ and $k=f_t-h$,
\[
 \|k\|_1=\mathbb E[k]=\mathbb E[f_t]-\mathbb E[h].
\]
By Lemma~\ref{lem:optional},
\[
 \mathbb E[f_t]=\mathbb E[f]=\|f\|_1.
\]
Moreover, $\mathbb E[h]=\mathbb E[h_1]=\mathbb E[\varepsilon_1]\ge0$, because $h_1=\varepsilon_1$. Hence $\|k\|_1\le \|f\|_1$. Since $0\le k\le \lambda$ almost surely by Lemma~\ref{lem:k-infty},
\[
 \|k\|_2^2\le \lambda\|k\|_1\le \lambda\|f\|_1.
 \qedhere
\]
\end{proof}

This completes the proof of Theorem~\ref{thm:stein-gundy}.

For general $f\in L^1(\mathcal F_\infty)$, write $f=f_+-f_-$. Applying Theorem~\ref{thm:stein-gundy} separately to $f_+$ and $f_-$ gives the following consequence.

\begin{corollary}
\label{cor:stein-gundy}
Let $f\in L^1(\mathcal F_\infty)$ and $\lambda>0$. Then one can write
\[
 f=g+h+k
\]
with $g,h,k\in L^1(\mathcal F_\infty)$ such that

\medskip
\noindent\textup{(a$'$)}
\[
\mathbb P\Bigl(\sup_{n\ge1}|\mathbb E_n[g]|>0\Bigr)
\le \frac{2\|f\|_1}{\lambda},
\qquad
\|g\|_1\le4\|f\|_1.
\]

\noindent\textup{(b$'$)}
\[
\Bigl\|\sum_{n=1}^{\infty}|\mathbb E_n[h]-\mathbb E_{n-1}[h]|\Bigr\|_1
\le4\|f\|_1.
\]
In particular, $\|h\|_1\le4\|f\|_1$.

\noindent\textup{(c$'$)}
\[
 \|k\|_\infty\le \lambda,
 \qquad
 \|k\|_1\le2\|f\|_1,
 \qquad
 \|k\|_2^2\le2\lambda\|f\|_1.
\]
\end{corollary}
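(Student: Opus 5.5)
We apply Theorem~\ref{thm:stein-gundy} separately to $f_+$ and $f_-$ and subtract. Since $f\in L^1(\mathcal F_\infty)$, both $f_+$ and $f_-$ are nonnegative and belong to $L^1(\mathcal F_\infty)$, with $\|f_\pm\|_1\le\|f\|_1$. The theorem, used with the same $\lambda$, gives
\[
 f_+=g^++h^++k^+,\qquad f_-=g^-+h^-+k^-.
\]
We then set $g:=g^+-g^-$, $h:=h^+-h^-$ and $k:=k^+-k^-$. These lie in $L^1(\mathcal F_\infty)$ and satisfy $f=g+h+k$. Each estimate in (a$'$)--(c$'$) should then follow from the corresponding estimate in the theorem by the triangle inequality. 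We bound each $\|f_\pm\|_1$ by $\|f\|_1$; in several places the sharper identity $\|f_+\|_1+\|f_-\|_1=\|f\|_1$ is available as well.

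For (a$'$), linearity of $\mathbb E_n$ gives $|\mathbb E_n[g]|\le|\mathbb E_n[g^+]|+|\mathbb E_n[g^-]|$. Hence
\[
 \Bigl\{\sup_n|\mathbb E_n[g]|>0\Bigr\}\subseteq\Bigl\{\sup_n|\mathbb E_n[g^+]|>0\Bigr\}\cup\Bigl\{\sup_n|\mathbb E_n[g^-]|>0\Bigr\},
\]
and a union bound gives the probability estimate $(\|f_+\|_1+\|f_-\|_1)/\lambda\le 2\|f\|_1/\lambda$. The $L^1$ bound on $g$ is $\|g\|_1\le 2\|f_+\|_1+2\|f_-\|_1\le4\|f\|_1$.

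For (b$'$), we use $|d h_n|\le|d h^+_n|+|d h^-_n|$, sum over $n$, and integrate; this gives the bound $4\|f\|_1$. The consequence $\|h\|_1\le4\|f\|_1$ then follows as in Lemma~\ref{lem:psi-sum}, because $h=\sum_n d h_n$ almost surely.

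For (c$'$), both $k^+$ and $k^-$ take values in $[0,\lambda]$. Their difference $k$ therefore lies in $[-\lambda,\lambda]$, so $\|k\|_\infty\le\lambda$; it is the nonnegativity of each piece that prevents the constant from doubling here. The $L^1$ bound is $\|k\|_1\le\|k^+\|_1+\|k^-\|_1\le2\|f\|_1$.

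The only step that needs some thought is the $L^2$ bound. The crude estimate $\|k^+-k^-\|_2^2\le 2(\|k^+\|_2^2+\|k^-\|_2^2)$ would give $4\lambda\|f\|_1$, which is too weak. Instead we use $|k|\le\lambda$ pointwise to write
\[
 \|k\|_2^2\le\lambda\|k\|_1\le\lambda\bigl(\|k^+\|_1+\|k^-\|_1\bigr)\le2\lambda\|f\|_1 .
\]
In fact this is even $\le\lambda\|f\|_1$, since $\|k^+\|_1+\|k^-\|_1\le\|f_+\|_1+\|f_-\|_1=\|f\|_1$, which is stronger than what the corollary states.
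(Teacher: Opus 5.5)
Your proof is correct and uses the same argument as the paper: apply Theorem~\ref{thm:stein-gundy} to $f_+$ and $f_-$, subtract, and use $0\le k^\pm\le\lambda$ to get $|k|\le\lambda$, which gives the $L^\infty$ bound and $\|k\|_2^2\le\lambda\|k\|_1$. Your remark that $\|f_+\|_1+\|f_-\|_1=\|f\|_1$ is also right, and it shows that each constant in (a$'$)--(c$'$) can be replaced by the corresponding undoubled constant of the theorem.
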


\begin{remark}
The same argument applies, without change of constants, to finite filtrations $(\mathcal F_n)_{n=1}^M$ and terminal values in $L^1(\mathcal F_M)$.
\end{remark}

\begin{remark}
The probability normalization is not essential. The proof uses only conditional expectations and stopping times, so the argument extends verbatim to $\sigma$-finite measure spaces.
\end{remark}

The next example shows that the assumption $f\in L^1(\mathcal F_\infty)$ is needed for part~\textup{(c)} of Theorem~\ref{thm:stein-gundy}.

\begin{counterexample}
\label{cex:Finfty}
Let $\Omega=[0,1]$ with Lebesgue measure and the trivial filtration $\mathcal F_n=\{\varnothing,\Omega\}$. Fix $\lambda=1$ and set
\[
 f=100\,\mathbf1_{[0,0.01]}.
\]
Then $\|f\|_1=1$ and $f_n=\mathbb E_n[f]=1$ for all $n\ge1$. Hence $r=\infty$, so $\varepsilon_n=0$ for all $n$, $\Lambda_m=0$, $s=\infty$, and $t=\infty$. Consequently $g=0$, $h=0$, and $k=f$. Thus $\|k\|_\infty=100>\lambda$.
\end{counterexample}

\section{Variants and sharpness}
\label{sec:sharpness}

We retain the notation of Section~\ref{sec:proof}. Formula \eqref{eq:k-repr} may be written as
\[
 k=k^{\mathrm{st}}+k^{\mathrm{pr}},
\qquad
 k^{\mathrm{st}}:=\sum_{j=1}^{\infty}\gamma_j\mathbf1_{\{s\ge j\}},
\qquad
 k^{\mathrm{pr}}:=\sum_{j=1}^{\infty}\mathbb E_{j-1}[\varepsilon_j]\mathbf1_{\{s\ge j\}}.
\]
The second term disappears when the stopping time $s$ is defined by the condition $\Lambda_m>0$. If one allows the conditional expectation term to have size at most $\theta\lambda$, then the same proof gives a one-parameter family of decompositions.

Recall that
\[
 \Lambda_0:=0,
 \qquad
 \Lambda_m:=\sum_{k=1}^{m}\mathbb E[\varepsilon_{k+1}\mid\mathcal F_k],\qquad m\ge1.
\]
For $\theta\ge0$, define
\begin{equation}
\label{eq:s-theta}
 s_{\theta}:=\inf\{m\ge1:\ \Lambda_m>\theta\lambda\},
 \qquad
 t_{\theta}:=r\wedge s_{\theta}.
\end{equation}
Replacing $s,t$ by $s_\theta,t_\theta$ in the construction of Section~\ref{sec:proof} gives functions $g_\theta,h_\theta,k_\theta$.

\begin{proposition}
\label{prop:theta-family}
Assume the hypotheses of Theorem~\ref{thm:stein-gundy}. Let $\lambda>0$ and $\theta\ge0$. Then one can write
\[
 f=g_{\theta}+h_{\theta}+k_{\theta}
\]
where $g_{\theta}$ and $h_{\theta}$ satisfy the same estimates as in Theorem~\ref{thm:stein-gundy}\textup{(a)--(b)}, and
\begin{equation}
\label{eq:k-theta-bounds}
 0\le k_{\theta}\le(1+\theta)\lambda\quad\text{a.s.},
 \qquad
 \|k_{\theta}\|_1\le \|f\|_1,
 \qquad
 \|k_{\theta}\|_2^2\le(1+\theta)\lambda\|f\|_1.
\end{equation}
More precisely,
\[
 k_{\theta}=k^{\mathrm{st}}_{\theta}+k^{\mathrm{pr}}_{\theta},
\]
where
\[
 k^{\mathrm{st}}_{\theta}:=\sum_{j=1}^{\infty}\gamma_j\mathbf1_{\{s_{\theta}\ge j\}},
 \qquad
 k^{\mathrm{pr}}_{\theta}:=\sum_{j=1}^{\infty}\mathbb E_{j-1}[\varepsilon_j]\mathbf1_{\{s_{\theta}\ge j\}},
\]
and
\[
 0\le k^{\mathrm{st}}_{\theta}\le \lambda,
 \qquad
 0\le k^{\mathrm{pr}}_{\theta}\le \theta\lambda
 \quad\text{a.s.}
\]
\end{proposition}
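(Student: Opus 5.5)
The proof reruns the argument of Section~\ref{sec:proof} with $s,t$ replaced by $s_\theta,t_\theta$, and checks which steps used the specific choice of $s$. Since $\Lambda_m$ is $\mathcal F_m$-measurable, $s_\theta$ is a stopping time, and so is $t_\theta=r\wedge s_\theta$. As before, $r=\infty$ forces $\Lambda_m\equiv0$, hence $s_\theta=\infty$, so $\{t_\theta<\infty\}=\{r<\infty\}$.

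We set $g_\theta:=f-f_{t_\theta}$. The $h$- and $k$-martingales are defined exactly as $h_n,k_n$, with $\mathbf 1_{\{s\ge j\}}$ replaced by $\mathbf 1_{\{s_\theta\ge j\}}$. Let $h_\theta$ be the limit from Lemma~\ref{lem:psi-sum} and put $k_\theta:=f_{t_\theta}-h_\theta$. Part (a) uses only \eqref{eq:Pr}, the identity $\{t_\theta<\infty\}=\{r<\infty\}$ and Lemma~\ref{lem:optional}, so it transfers verbatim. The proof of Lemma~\ref{lem:psi-sum} uses only that $\{s_\theta\ge n\}\in\mathcal F_{n-1}$, together with $\sum_n\mathbb E[\varepsilon_n]\le\|f\|_1$. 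It therefore also transfers verbatim and gives (b). The representation \eqref{eq:k-repr} with $s_\theta$ in place of $s$ gives the splitting $k_\theta=k^{\mathrm{st}}_\theta+k^{\mathrm{pr}}_\theta$.

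For $k^{\mathrm{st}}_\theta$, the telescoping in the proof of Lemma~\ref{lem:k-infty} did not depend on the choice of $s$. It gives $k^{\mathrm{st}}_\theta=f_{s_\theta\wedge r^-}$, and this lies in $[0,\lambda]$ because $f\ge0$ and $f_m\le\lambda$ for $m<r$. (When $r=\infty$ one has $s_\theta=\infty$, and the value is $f\le\lambda$.)

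The one genuinely new step is the bound on $k^{\mathrm{pr}}_\theta$. Each summand $\mathbb E_{j-1}[\varepsilon_j]$ is nonnegative by \eqref{eq:eps-nonneg}, so $k^{\mathrm{pr}}_\theta\ge0$. On $\{s_\theta=m<\infty\}$ the sum equals $\sum_{j=2}^m\mathbb E_{j-1}[\varepsilon_j]=\Lambda_{m-1}$. When $m\ge2$, minimality of $m$ gives $\Lambda_{m-1}\le\theta\lambda$; when $m=1$ the sum is $\Lambda_0=0$. On $\{s_\theta=\infty\}$ the sum equals $\lim_m\Lambda_m=\sup_m\Lambda_m\le\theta\lambda$, since $\Lambda_m$ is nondecreasing. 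One must also check that this sum is the a.s.\ limit of $k_n$, which follows from monotone convergence of the nonnegative series. Thus $0\le k^{\mathrm{pr}}_\theta\le\theta\lambda$, and hence $0\le k_\theta\le(1+\theta)\lambda$.

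Finally, the $L^1$ bound follows exactly as in Lemma~\ref{lem:k-L1L2}. Since $k_\theta\ge0$, we have $\|k_\theta\|_1=\mathbb E[f_{t_\theta}]-\mathbb E[h_\theta]=\|f\|_1-\mathbb E[\varepsilon_1]\le\|f\|_1$. The $L^2$ bound then comes from $\|k_\theta\|_2^2\le\|k_\theta\|_\infty\|k_\theta\|_1\le(1+\theta)\lambda\|f\|_1$.

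The only point needing real care is the $k^{\mathrm{pr}}_\theta$ estimate on $\{s_\theta=\infty\}$, where the bound comes from the monotone limit of $\Lambda_m$.
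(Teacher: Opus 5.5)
Your argument follows the paper's proof step for step, and it inherits the one step of that proof that fails. That step is the claim that $r=\infty$ forces $\Lambda_m\equiv0$, hence $s_\theta=\infty$ on $\{r=\infty\}$ and $\{t_\theta<\infty\}=\{r<\infty\}$. The crossing increments do vanish pointwise on $\{r=\infty\}$. But $\Lambda_m$ is built from the conditional expectations $\mathbb E_k[\varepsilon_{k+1}]$, and since $\{r=\infty\}\notin\mathcal F_k$, nothing makes these vanish there.

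Here is a concrete example. Take the two-point filtration of Example~\ref{ex:sharp-hk} ($\mathcal F_1$ trivial, $\mathcal F_n=\sigma(E)$ for $n\ge2$) with $\mathbb P(E)=\tfrac14$, $\lambda=1$ and $f=2\cdot\mathbf1_E$.
\begin{itemize}
\item $\|f\|_1=f_1=\tfrac12$, with $r=2$ on $E$ and $r=\infty$ on $E^c$.
\item $\varepsilon_2=\tfrac32\mathbf1_E$ is the only nonzero crossing increment.
\item Hence $\Lambda_m=\mathbb E[\varepsilon_2]=\tfrac38$ on all of $\Omega$, for every $m\ge1$.
\end{itemize}
For $0\le\theta<\tfrac38$ (in particular $\theta=0$) you get $s_\theta=t_\theta=1$ everywhere. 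So $g_\theta=f-f_1=\tfrac32\mathbf1_E-\tfrac12\mathbf1_{E^c}=\mathbb E_2[g_\theta]$ vanishes nowhere, and
\[
 \mathbb P\Bigl(\sup_{n\ge1}|\mathbb E_n[g_\theta]|>0\Bigr)=1>\frac12=\frac{\|f\|_1}{\lambda}.
\]
Thus the first estimate of Theorem~\ref{thm:stein-gundy}\textup{(a)} does not transfer to the constructed $g_\theta$. In this example it is false for $0\le\theta<\tfrac38$, so no more careful verification along your lines can establish it.

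The rest of your proposal is sound. The following do not use the false identity:
\begin{itemize}
\item the bound $\|g_\theta\|_1\le2\|f\|_1$ and part \textup{(b)};
\item the splitting $k_\theta=k^{\mathrm{st}}_\theta+k^{\mathrm{pr}}_\theta$ and the bound $0\le k^{\mathrm{pr}}_\theta\le\theta\lambda$;
\item the $L^1$ and $L^2$ bounds.
\end{itemize}
In the $k^{\mathrm{st}}_\theta$ step, drop the parenthetical ``when $r=\infty$ one has $s_\theta=\infty$''. The bound still holds: on $\{r=\infty\}$ one has $f_n\le\lambda$ for all $n$ and $f\le\lambda$ a.s., so $f_{s_\theta}\le\lambda$ whether or not $s_\theta$ is finite.

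For $\theta>0$, what this construction actually gives for the localized part is
\[
 \mathbb P(t_\theta<\infty)\le\mathbb P(r<\infty)+\mathbb P\Bigl(\sup_{m}\Lambda_m>\theta\lambda\Bigr)\le\Bigl(1+\frac1\theta\Bigr)\frac{\|f\|_1}{\lambda}.
\]
This uses Markov's inequality and $\mathbb E[\sup_m\Lambda_m]=\sum_{k\ge1}\mathbb E[\varepsilon_{k+1}]\le\|f\|_1$. For $\theta=0$ the construction gives no bound of the form $C\|f\|_1/\lambda$ at all. To see this, take $f=2\lambda\mathbf1_E$ in the same filtration and let $\mathbb P(E)\downarrow0$: then $\Lambda_1>0$ everywhere, so $t=1$ and $g\ne0$ on all of $\Omega$. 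Keeping the coefficient $1$ in \textup{(a)} would therefore require changing the construction itself (the choice of the second stopping time), not just its verification.
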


\begin{proof}
The proofs of the estimates for $g_{\theta}$ and $h_{\theta}$ are unchanged, since they only use that $s_{\theta}$ is a stopping time and that $s_{\theta}=\infty$ whenever $r=\infty$.

Repeating the proof of Lemma~\ref{lem:k-infty} with $s$ replaced by $s_{\theta}$ gives
\[
 k_{\theta}=\sum_{j\ge1}\gamma_j\mathbf1_{\{s_{\theta}\ge j\}}+
 \sum_{j\ge1}\mathbb E_{j-1}[\varepsilon_j]\mathbf1_{\{s_{\theta}\ge j\}}
 =k^{\mathrm{st}}_{\theta}+k^{\mathrm{pr}}_{\theta}.
\]
As before, $k^{\mathrm{st}}_{\theta}=f_{s_{\theta}\wedge r^-}$, hence $0\le k^{\mathrm{st}}_{\theta}\le\lambda$. For the conditional expectation part, set
\[
 B_{\theta}:=\sum_{j=2}^{\infty}\mathbb E_{j-1}[\varepsilon_j]\mathbf1_{\{s_{\theta}\ge j\}}.
\]
If $s_{\theta}=m<\infty$, then
\[
 B_{\theta}=\sum_{j=2}^{m}\mathbb E_{j-1}[\varepsilon_j]=\Lambda_{m-1}\le\theta\lambda.
\]
If $s_{\theta}=\infty$, then $\Lambda_m\le\theta\lambda$ for every $m$, so again $B_{\theta}\le\theta\lambda$. This proves the pointwise bound in \eqref{eq:k-theta-bounds}.

The $L^1$ and $L^2$ estimates are the same as in Lemma~\ref{lem:k-L1L2}:
\[
 \|k_{\theta}\|_1=\mathbb E[k_{\theta}]=\|f\|_1-\mathbb E[\varepsilon_1]\le \|f\|_1,
\]
and therefore
\[
 \|k_{\theta}\|_2^2\le (1+\theta)\lambda\|k_{\theta}\|_1
 \le(1+\theta)\lambda\|f\|_1.
 \qedhere
\]
\end{proof}

For \(\theta\ge0\), Proposition~\ref{prop:theta-family} yields a decomposition
\[
f=g_\theta+h_\theta+k_\theta,
\qquad
0\le k_\theta\le (1+\theta)\lambda.
\]
The next proposition provides a further decomposition of $k_\theta$ into a stopped part and a conditional expectation part, which will also be
useful in other problems.

\begin{proposition}\label{prop:four-term-gs}
Let \(f\ge0\) belong to \(L^1(\mathcal F_\infty)\), let \(\lambda>0\), and let
\(\theta\ge0\). Retain the notation of Section~\ref{sec:proof} and
\eqref{eq:s-theta}; thus
\[
r:=\inf\{n\ge1:\ f_n>\lambda\},
\qquad
\varepsilon_n:=d f_n\,\mathbf1_{\{r=n\}},
\qquad
\gamma_n:=d f_n\,\mathbf1_{\{r>n\}},
\]
\[
\Lambda_0:=0,
\qquad
\Lambda_m:=\sum_{k=1}^{m}\mathbb E_k[\varepsilon_{k+1}],
\qquad
s_\theta:=\inf\{m\ge1:\ \Lambda_m>\theta\lambda\},
\qquad
t_\theta:=r\wedge s_\theta.
\]
Also set
\[
r^-:=r-1 \ \text{on }\{r<\infty\},
\qquad
r^-:=\infty \ \text{on }\{r=\infty\}.
\]

Define
\[
g_\theta:=f-f_{t_\theta},
\]
\[
h_{\theta,n}:=
\sum_{j=1}^{n}
\bigl(\varepsilon_j-\mathbb E_{j-1}[\varepsilon_j]\bigr)\mathbf1_{\{s_\theta\ge j\}},
\qquad n\ge1,
\]
\[
k_{\theta,n}^{\mathrm{st}}
:=
\sum_{j=1}^{n}\gamma_j\,\mathbf1_{\{s_\theta\ge j\}},
\qquad
k_{\theta,n}^{\mathrm{pr}}
:=
\sum_{j=1}^{n}\mathbb E_{j-1}[\varepsilon_j]\mathbf1_{\{s_\theta\ge j\}},
\qquad n\ge1.
\]
Then \(h_{\theta,n}\) converges almost surely and in \(L^1\) to a limit
\(h_\theta\in L^1(\mathcal F_\infty)\), and the sequences
\[
k_{\theta,n}^{\mathrm{st}},
\qquad
k_{\theta,n}^{\mathrm{pr}}
\]
converge almost surely and in \(L^1\) to limits
\[
k_\theta^{\mathrm{st}},
\qquad
k_\theta^{\mathrm{pr}}
\in L^1(\mathcal F_\infty).
\]
Moreover,
\[
f=g_\theta+h_\theta+k_\theta^{\mathrm{st}}+k_\theta^{\mathrm{pr}}.
\]

\medskip
\noindent\textup{(a)}
\[
\mathbb P\Bigl(\sup_{n\ge1}|\mathbb E_n[g_\theta]|>0\Bigr)
\le \frac{\|f\|_1}{\lambda},
\qquad
\|g_\theta\|_1\le 2\|f\|_1.
\]

\noindent\textup{(b)}
\[
\Bigl\|\sum_{n=1}^{\infty}
\bigl|\mathbb E_n[h_\theta]-\mathbb E_{n-1}[h_\theta]\bigr|
\Bigr\|_1
\le 2\|f\|_1.
\]
In particular,
\[
\|h_\theta\|_1\le 2\|f\|_1.
\]

\noindent\textup{(c)}
\[
k_\theta^{\mathrm{st}}=f_{s_\theta\wedge r^-},
\qquad
0\le k_\theta^{\mathrm{st}}\le \lambda
\quad\text{a.s.}
\]

\noindent\textup{(d)}
\[
k_\theta^{\mathrm{pr}}
=
\Lambda_{s_\theta-1}\,\mathbf1_{\{s_\theta<\infty\}}
+
\Bigl(\sup_{m\ge1}\Lambda_m\Bigr)\mathbf1_{\{s_\theta=\infty\}},
\qquad
0\le k_\theta^{\mathrm{pr}}\le \theta\lambda
\quad\text{a.s.}
\]

\noindent\textup{(e)}
\[
\mathbb E[h_\theta]=\mathbb E[\varepsilon_1],
\]
\[
\|k_\theta^{\mathrm{st}}\|_1+\|k_\theta^{\mathrm{pr}}\|_1
=
\|f\|_1-\mathbb E[\varepsilon_1]
\le \|f\|_1,
\]
\[
\|k_\theta^{\mathrm{st}}\|_2^2
\le \lambda\|k_\theta^{\mathrm{st}}\|_1,
\qquad
\|k_\theta^{\mathrm{pr}}\|_2^2
\le \theta\lambda\|k_\theta^{\mathrm{pr}}\|_1,
\]
and
\[
\|k_\theta^{\mathrm{st}}+k_\theta^{\mathrm{pr}}\|_2^2
\le (1+\theta)\lambda\|f\|_1.
\]
\end{proposition}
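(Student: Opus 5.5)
The plan is to rerun the construction of Section~\ref{sec:proof} with $s,t$ replaced by $s_\theta,t_\theta$, reusing Lemma~\ref{lem:optional}, Lemma~\ref{lem:psi-sum} and Proposition~\ref{prop:theta-family}. The one genuinely new point is that the two pieces of $k_\theta$ converge separately, almost surely and in $L^1$, and are given by the closed forms in (c) and (d).

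\textbf{Parts (a) and (b).} Part (a) is exactly the argument for the $g$ term. It only needs three facts:
\begin{itemize}
\item $\mathbb E_n[f_{t_\theta}]=f_{n\wedge t_\theta}$;
\item $\{t_\theta<\infty\}\subseteq\{r<\infty\}$, since $t_\theta\le r$;
\item the bound \eqref{eq:Pr} together with Lemma~\ref{lem:optional}.
\end{itemize}
For (b), the proof of Lemma~\ref{lem:psi-sum} uses only that $\mathbf1_{\{s_\theta\ge n\}}\in\mathcal F_{n-1}$, $\varepsilon_n\ge0$, and $\sum_n\mathbb E[\varepsilon_n]\le\|f\|_1$. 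So it gives $\sum_n\|dh_{\theta,n}\|_1\le 2\|f\|_1$, hence convergence of $h_{\theta,n}$ a.s.\ and in $L^1$ to $h_\theta$, with $\mathbb E_n[h_\theta]=h_{\theta,n}$. Since $h_{\theta,1}=\varepsilon_1$ and the $h_{\theta,n}$ form a martingale, $\mathbb E[h_\theta]=\mathbb E[\varepsilon_1]$, which is the first identity in (e).

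\textbf{Part (c).} As in the proof of Lemma~\ref{lem:k-infty}, the sum telescopes:
\[
k^{\mathrm{st}}_{\theta,n}=\sum_{j=1}^{n\wedge s_\theta\wedge r^-}df_j=f_{n\wedge s_\theta\wedge r^-}.
\]
On $\{s_\theta\wedge r^-<\infty\}$ this is eventually constant. On $\{r=\infty\}$ it equals $f_n$, which tends to $f$ almost surely. Every $f_m$ with $m<r$ satisfies $0\le f_m\le\lambda$, so the limit lies in $[0,\lambda]$ and equals $f_{s_\theta\wedge r^-}$. On $\{r=\infty\}$ we also have $f\le\lambda$, by the same reasoning as in Lemma~\ref{lem:k-infty}. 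Because the sequence is bounded by $\lambda$, dominated convergence upgrades this to $L^1$ convergence.

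\textbf{Part (d).} Since $\mathbb E_0\equiv0$, we have
\[
k^{\mathrm{pr}}_{\theta,n}=\sum_{j=2}^{n}\mathbb E_{j-1}[\varepsilon_j]\mathbf1_{\{s_\theta\ge j\}}=\Lambda_{(n\wedge s_\theta)-1}.
\]
This is nondecreasing in $n$, because $\mathbb E_{j-1}[\varepsilon_j]\ge0$.
\begin{itemize}
\item On $\{s_\theta<\infty\}$ it stabilizes at $\Lambda_{s_\theta-1}$, which is at most $\theta\lambda$ by minimality of $s_\theta$.
\item On $\{s_\theta=\infty\}$ it increases to $\sup_m\Lambda_m$, which is at most $\theta\lambda$ because $\Lambda_m\le\theta\lambda$ for all $m$.
\end{itemize}
So the limit exists a.s., lies in $[0,\theta\lambda]$, and the convergence is in $L^1$ by monotone or dominated convergence. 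Both limits are $\mathcal F_\infty$-measurable.

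\textbf{The identity $f=g_\theta+h_\theta+k^{\mathrm{st}}_\theta+k^{\mathrm{pr}}_\theta$ and part (e).} By \eqref{eq:barf-sum},
\[
h_{\theta,n}+k^{\mathrm{st}}_{\theta,n}+k^{\mathrm{pr}}_{\theta,n}=f_{n\wedge t_\theta}=\mathbb E_n[f_{t_\theta}].
\]
Lemma~\ref{lem:optional} puts $f_{t_\theta}$ in $L^1$, and $f_{t_\theta}$ is $\mathcal F_\infty$-measurable. Martingale convergence therefore gives $f_{n\wedge t_\theta}\to f_{t_\theta}$ almost surely. Comparing with the almost sure limits above yields $f_{t_\theta}=h_\theta+k^{\mathrm{st}}_\theta+k^{\mathrm{pr}}_\theta$, and adding $g_\theta$ gives the decomposition.

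For (e), both $k$-pieces are nonnegative, so taking expectations and using $\mathbb E[f_{t_\theta}]=\|f\|_1$ gives
\[
\|k^{\mathrm{st}}_\theta\|_1+\|k^{\mathrm{pr}}_\theta\|_1=\|f\|_1-\mathbb E[\varepsilon_1].
\]
The $L^2$ bounds follow from $0\le u\le c$, which gives $\|u\|_2^2\le c\|u\|_1$, applied to each piece and to their sum with $c=(1+\theta)\lambda$.

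The main obstacle I expect is the almost sure identification of the limits on the event $\{r=\infty\}$. There $k^{\mathrm{st}}_\theta=f$, and the argument uses $f\in L^1(\mathcal F_\infty)$ through martingale convergence. Counterexample~\ref{cex:Finfty} shows this step fails without that hypothesis, so it must be argued carefully.
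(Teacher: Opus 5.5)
Your argument for the probability bound in (a) fails at the second bullet. The inequality $t_\theta\le r$ gives $\{r<\infty\}\subseteq\{t_\theta<\infty\}$, which is the reverse of what you need. The inclusion $\{t_\theta<\infty\}\subseteq\{r<\infty\}$ would require $s_\theta=\infty$ on $\{r=\infty\}$, and that is false in general. The reason is that $\Lambda_m$ is a sum of conditional expectations $\mathbb E_k[\varepsilon_{k+1}]$. These need not vanish at a point with $r=\infty$ when its $\mathcal F_k$-atom also contains paths that cross at time $k+1$. The paper's proof asserts the same inclusion on the ground that $\Lambda_m=0$ on $\{r=\infty\}$, which is the same error, so following the paper does not rescue the step.

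Here is a counterexample. On the two-point filtration of Proposition~\ref{prop:global-joint-sharpness} with $\mathbb P(E)=\tfrac14$, let $f=2\lambda\mathbf1_E$. Then:
\begin{itemize}
\item $\|f\|_1=\lambda/2$ and $f_1\equiv\lambda/2$;
\item $r=2$ on $E$ and $r=\infty$ on $E^c$;
\item the only nonzero crossing increment is $\varepsilon_2=\tfrac32\lambda\mathbf1_E$;
\item $\Lambda_m=\mathbb E[\varepsilon_2]=\tfrac38\lambda$ on all of $\Omega$, including $E^c$.
\end{itemize}
For $0\le\theta<\tfrac38$ this forces $s_\theta=t_\theta=1$ everywhere. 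So $g_\theta=f-f_1$ equals $\tfrac32\lambda$ on $E$ and $-\tfrac12\lambda$ on $E^c$, and
\[
\mathbb P\Bigl(\sup_{n\ge1}|\mathbb E_n[g_\theta]|>0\Bigr)=1>\tfrac12=\frac{\|f\|_1}{\lambda}.
\]
Thus the probability bound in (a) is false as stated for small $\theta$, and no repair of your step can prove it. At $\theta=0$ the same example also defeats the argument given for Theorem~\ref{thm:stein-gundy}(a).

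What the construction does yield is $\{t_\theta<\infty\}=\{r<\infty\}\cup\{\sup_m\Lambda_m>\theta\lambda\}$. Since $\mathbb E[\sup_m\Lambda_m]=\sum_{n\ge2}\mathbb E[\varepsilon_n]\le\|f\|_1$, Markov's inequality gives the bound $(1+\theta^{-1})\|f\|_1/\lambda$ for $\theta>0$. At $\theta=0$ no bound of the form $C\|f\|_1/\lambda$ holds: take $f=\frac{c\lambda}{p}\mathbf1_E$ with $0<p<c\le1$; the probability is $1$ while $\|f\|_1/\lambda=c$.

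Everything else in your proposal is correct and tracks the paper's proof closely. This covers:
\begin{itemize}
\item the bound $\|g_\theta\|_1\le2\|f\|_1$;
\item part (b), via the argument of Lemma~\ref{lem:psi-sum};
\item the closed forms and bounds in (c) and (d);
\item the four-term identity, where you pass to the limit almost surely using $\mathbb E_n[f_{t_\theta}]\to f_{t_\theta}$ and the paper does so in $L^1$ (either works);
\item part (e).
\end{itemize}
Two small remarks. In (c), ``on $\{r=\infty\}$ it equals $f_n$'' silently uses the same false claim. It should read ``on $\{r=s_\theta=\infty\}$''; the case $s_\theta<\infty=r$ is already covered by your eventually-constant case. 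Also, the step you flagged as the main obstacle, the identification of the limit on $\{r=\infty\}$, is routine. The real obstruction is the probability bound in (a).
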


\begin{proof}
Since \(r=\infty\) implies \(\varepsilon_n=0\) for every \(n\), one has
\(\Lambda_m=0\) for every \(m\) on \(\{r=\infty\}\), hence \(s_\theta=\infty\) on
\(\{r=\infty\}\). Because always \(t_\theta\le r\), it follows that
\[
\{t_\theta<\infty\}=\{r<\infty\}.
\]

As before,
\[
\mathbb E_n[f_{t_\theta}]=f_{n\wedge t_\theta},
\qquad
\mathbb E[f_{t_\theta}]=\mathbb E[f]=\|f\|_1.
\]
Therefore,
\[
\mathbb E_n[g_\theta]=f_n-f_{n\wedge t_\theta}.
\]
If \(t_\theta=\infty\), then \(g_\theta=0\) and \(\mathbb E_n[g_\theta]=0\) for all \(n\).
Hence
\[
\mathbb P\Bigl(\sup_{n\ge1}|\mathbb E_n[g_\theta]|>0\Bigr)
\le
\mathbb P(t_\theta<\infty)
=
\mathbb P(r<\infty)
\le
\frac{\|f\|_1}{\lambda}
\]
by \eqref{eq:Pr}. Also,
\[
\|g_\theta\|_1
=
\mathbb E [|f-f_{t_\theta}|]
\le
\mathbb E[f]+\mathbb E[f_{t_\theta}]
=
2\|f\|_1.
\]
This proves \textup{(a)}.

Next,
\[
f_{n\wedge t_\theta}
=
\sum_{j=1}^{n} d f_j\,\mathbf1_{\{t_\theta\ge j\}}
=
\sum_{j=1}^{n} d f_j\,\mathbf1_{\{r\ge j\}}\mathbf1_{\{s_\theta\ge j\}}.
\]
Since
\[
d f_j\,\mathbf1_{\{r\ge j\}}=\varepsilon_j+\gamma_j,
\]
we obtain
\begin{equation}
\label{eq:four-term-stopped}
f_{n\wedge t_\theta}
=
\sum_{j=1}^{n}(\varepsilon_j+\gamma_j)\mathbf1_{\{s_\theta\ge j\}}
=
h_{\theta,n}+k_{\theta,n}^{\mathrm{st}}+k_{\theta,n}^{\mathrm{pr}}.
\end{equation}

Set
\[
d h_{\theta,n}:=h_{\theta,n}-h_{\theta,n-1}
=
\bigl(\varepsilon_n-\mathbb E_{n-1}[\varepsilon_n]\bigr)\mathbf1_{\{s_\theta\ge n\}},
\qquad
h_{\theta,0}:=0.
\]
Since \(\mathbf1_{\{s_\theta\ge n\}}\in\mathcal F_{n-1}\), one has
\[
\mathbb E_{n-1}[d h_{\theta,n}]
=
\bigl(\mathbb E_{n-1}[\varepsilon_n]-\mathbb E_{n-1}[\varepsilon_n]\bigr)
\mathbf1_{\{s_\theta\ge n\}}
=
0,
\]
so \((h_{\theta,n})_{n\ge1}\) is a martingale. By \eqref{eq:eps-nonneg},
\[
|d h_{\theta,n}|
\le
\varepsilon_n\mathbf1_{\{s_\theta\ge n\}}
+
\mathbb E_{n-1}[\varepsilon_n]\mathbf1_{\{s_\theta\ge n\}}.
\]
For \(n\ge2\),
\[
\mathbb E\bigl[\mathbb E_{n-1}[\varepsilon_n]\mathbf1_{\{s_\theta\ge n\}}\bigr]
=
\mathbb E\bigl[\varepsilon_n\mathbf1_{\{s_\theta\ge n\}}\bigr],
\]
while for \(n=1\),
\[
d h_{\theta,1}
=
(\varepsilon_1-\mathbb E_0[\varepsilon_1])\mathbf1_{\{s_\theta\ge1\}}
=
\varepsilon_1.
\]
Hence, for every \(n\ge1\),
\[
\|d h_{\theta,n}\|_1\le 2\,\mathbb E[\varepsilon_n].
\]
Arguing as in Lemma~\ref{lem:psi-sum}, one has
\[
\sum_{n=1}^{\infty}\mathbb E[\varepsilon_n]\le \|f\|_1.
\]
Therefore
\[
\sum_{n=1}^{\infty}\|d h_{\theta,n}\|_1
\le
2\|f\|_1,
\]
and Tonelli's theorem yields
\[
\Bigl\|\sum_{n=1}^{\infty}|d h_{\theta,n}|\Bigr\|_1
=
\sum_{n=1}^{\infty}\|d h_{\theta,n}\|_1
\le
2\|f\|_1.
\]
Hence \(\sum_{n\ge1}|d h_{\theta,n}|<\infty\) almost surely, so
\(h_{\theta,n}\) converges almost surely and in \(L^1\) to
\[
h_\theta:=\sum_{n=1}^{\infty} d h_{\theta,n}\in L^1(\mathcal F_\infty).
\]
Since \((h_{\theta,n})_{n\ge1}\) is an \(L^1\)-bounded martingale converging to
\(h_\theta\), one has
\[
h_{\theta,n}=\mathbb E_n[h_\theta],
\qquad
\mathbb E_n[h_\theta]-\mathbb E_{n-1}[h_\theta]=d h_{\theta,n}.
\]
Consequently,
\[
\Bigl\|\sum_{n=1}^{\infty}
\bigl|\mathbb E_n[h_\theta]-\mathbb E_{n-1}[h_\theta]\bigr|
\Bigr\|_1
=
\Bigl\|\sum_{n=1}^{\infty}|d h_{\theta,n}|\Bigr\|_1
\le
2\|f\|_1,
\]
and
\[
\|h_\theta\|_1
\le
\Bigl\|\sum_{n=1}^{\infty}|d h_{\theta,n}|\Bigr\|_1
\le
2\|f\|_1.
\]
This proves \textup{(b)}.

For the stopped part,
\begin{align*}
k_{\theta,n}^{\mathrm{st}} =
\sum_{j=1}^{n}\gamma_j\,\mathbf1_{\{s_\theta\ge j\}} =
\sum_{j=1}^{n} d f_j\,\mathbf1_{\{r>j\}}\mathbf1_{\{s_\theta\ge j\}} =
\sum_{j=1}^{n\wedge s_\theta\wedge r^-} d f_j  = f_{n\wedge s_\theta\wedge r^-},
\end{align*}
because \(f_0=\mathbb E_0[f]=0\). Hence
\[
k_{\theta,n}^{\mathrm{st}}
\to
f_{s_\theta\wedge r^-}
=:k_\theta^{\mathrm{st}}
\qquad\text{a.s. and in }L^1.
\]
If \(r<\infty\), then \(f_m\le\lambda\) for every \(m<r\), so
\[
0\le f_{s_\theta\wedge r^-}\le \lambda.
\]
If \(r=\infty\), then \(f_n\le\lambda\) for every \(n\ge1\), and since \(f_n\to f\)
almost surely,
\[
0\le f\le \lambda
\qquad\text{a.s.}
\]
Thus
\[
k_\theta^{\mathrm{st}}=f_{s_\theta\wedge r^-},
\qquad
0\le k_\theta^{\mathrm{st}}\le \lambda
\quad\text{a.s.}
\]
This proves \textup{(c)}.

For the conditional expectation part, \(\varepsilon_n\ge0\) implies
\(\mathbb E_{n-1}[\varepsilon_n]\ge0\), and therefore
\[
0\le k_{\theta,n}^{\mathrm{pr}}\le k_{\theta,n+1}^{\mathrm{pr}},
\qquad n\ge1.
\]
Since \(\mathbb E_0\equiv0\),
\[
k_{\theta,n}^{\mathrm{pr}}
=
\sum_{j=2}^{n}\mathbb E_{j-1}[\varepsilon_j]\mathbf1_{\{s_\theta\ge j\}}.
\]
If \(s_\theta=m<\infty\), then for every \(n\ge m\),
\[
k_{\theta,n}^{\mathrm{pr}}
=
\sum_{j=2}^{m}\mathbb E_{j-1}[\varepsilon_j]
=
\sum_{k=1}^{m-1}\mathbb E_k[\varepsilon_{k+1}]
=
\Lambda_{m-1}
\le
\theta\lambda.
\]
If \(s_\theta=\infty\), then
\[
k_{\theta,n}^{\mathrm{pr}}
=
\sum_{j=2}^{n}\mathbb E_{j-1}[\varepsilon_j]
=
\Lambda_{n-1}\uparrow \sup_{m\ge1}\Lambda_m
\le
\theta\lambda.
\]
Hence
\[
k_{\theta,n}^{\mathrm{pr}}
\to
\Lambda_{s_\theta-1}\mathbf1_{\{s_\theta<\infty\}}
+
\Bigl(\sup_{m\ge1}\Lambda_m\Bigr)\mathbf1_{\{s_\theta=\infty\}}
=:k_\theta^{\mathrm{pr}}
\]
almost surely, and
\[
0\le k_\theta^{\mathrm{pr}}\le \theta\lambda
\quad\text{a.s.}
\]
Since
\[
0\le k_{\theta,n}^{\mathrm{pr}}\le \theta\lambda,
\]
dominated convergence gives \(k_{\theta,n}^{\mathrm{pr}}\to k_\theta^{\mathrm{pr}}\) in
\(L^1\). This proves \textup{(d)}.

Now \eqref{eq:four-term-stopped}, together with the \(L^1\) convergence of
\(h_{\theta,n}\), \(k_{\theta,n}^{\mathrm{st}}\), and \(k_{\theta,n}^{\mathrm{pr}}\),
yields
\[
f_{t_\theta}=h_\theta+k_\theta^{\mathrm{st}}+k_\theta^{\mathrm{pr}}.
\]
Therefore,
\[
f
=
(f-f_{t_\theta})+h_\theta+k_\theta^{\mathrm{st}}+k_\theta^{\mathrm{pr}}
=
g_\theta+h_\theta+k_\theta^{\mathrm{st}}+k_\theta^{\mathrm{pr}}.
\]

Since \(h_{\theta,1}=\varepsilon_1\) and \(h_{\theta,n}\to h_\theta\) in \(L^1\),
\[
\mathbb E[h_\theta]
=
\mathbb E[h_{\theta,1}]
=
\mathbb E[\varepsilon_1].
\]
Also,
\[
k_\theta^{\mathrm{st}}\ge0,
\qquad
k_\theta^{\mathrm{pr}}\ge0,
\]
so
\begin{align*}
\|k_\theta^{\mathrm{st}}\|_1+\|k_\theta^{\mathrm{pr}}\|_1
&=
\mathbb E[k_\theta^{\mathrm{st}}+k_\theta^{\mathrm{pr}}] =
\mathbb E[f_{t_\theta}]-\mathbb E[h_\theta]\\
& = \|f\|_1-\mathbb E[\varepsilon_1] \le
\|f\|_1.
\end{align*}
Finally,
\[
0\le k_\theta^{\mathrm{st}}\le \lambda,
\qquad
0\le k_\theta^{\mathrm{pr}}\le \theta\lambda,
\qquad
0\le k_\theta^{\mathrm{st}}+k_\theta^{\mathrm{pr}}\le (1+\theta)\lambda.
\]
Hence
\[
\|k_\theta^{\mathrm{st}}\|_2^2
\le
\lambda\|k_\theta^{\mathrm{st}}\|_1,
\qquad
\|k_\theta^{\mathrm{pr}}\|_2^2
\le
\theta\lambda\|k_\theta^{\mathrm{pr}}\|_1,
\]
and
\[
\|k_\theta^{\mathrm{st}}+k_\theta^{\mathrm{pr}}\|_2^2
\le
(1+\theta)\lambda\|k_\theta^{\mathrm{st}}+k_\theta^{\mathrm{pr}}\|_1
\le
(1+\theta)\lambda\|f\|_1.
\]
This proves \textup{(e)} and completes the proof.
\end{proof}

\begin{remark}
Proposition~\ref{prop:four-term-gs} refines Proposition~\ref{prop:theta-family} by
writing the bounded term as
\[
k_\theta
=
k_\theta^{\mathrm{st}}+k_\theta^{\mathrm{pr}}
=
f_{s_\theta\wedge r^-}
+
\sum_{j=2}^{\infty}\mathbb E_{j-1}[\varepsilon_j]\mathbf1_{\{s_\theta\ge j\}}.
\]
When \(\theta=0\), one has
\[
k_0^{\mathrm{pr}}=0,
\qquad
k_0^{\mathrm{st}}=k,
\]
and Proposition~\ref{prop:four-term-gs} reduces to Theorem~\ref{thm:stein-gundy}.
\end{remark}

We now turn to sharpness. The next examples concern the stopping-time construction above.

\begin{example}
\label{ex:sharp-prob}
Let $\Omega=E\sqcup E^c$ with $\mathbb P(E)=p\in(0,1)$, and let $\mathcal F_n=\sigma(E)$ for all $n\ge1$. Fix $\lambda>0$ and set $f=(1+\delta)\lambda\mathbf1_E$ with $\delta>0$. Then $f_n=f$ for all $n$, so $r=1$ on $E$ and $r=\infty$ on $E^c$. Hence
\[
 \mathbb P(r<\infty)=p,
 \qquad
 \frac{\|f\|_1}{\lambda}=(1+\delta)p.
\]
Letting $\delta\downarrow0$ shows that the coefficient $1$ in \eqref{eq:Pr}, and therefore in Theorem~\ref{thm:stein-gundy}\textup{(a)}, is sharp.
\end{example}

\begin{example}
\label{ex:sharp-g}
Let $\mathcal F_1=\{\varnothing,\Omega\}$ and $\mathcal F_2=\mathcal F_\infty=\{\varnothing,E,E^c,\Omega\}$. Define $f:=p^{-1}\mathbf1_E$, so that $\|f\|_1=1$ and $f_1=\mathbb E[f]=1$. Fix $\lambda\in(0,1)$. Then $f_1>\lambda$, so $r=t=1$ and $f_t\equiv1$. Hence
\[
 g=f-f_t=p^{-1}\mathbf1_E-1.
\]
Therefore,
\[
 \|g\|_1=p\Bigl|\frac1p-1\Bigr|+(1-p)|0-1|=2(1-p)=2(1-p)\|f\|_1.
\]
Letting $p\downarrow0$ shows that the constant $2$ in Theorem~\ref{thm:stein-gundy}\textup{(a)} is sharp.
\end{example}

\begin{example}
\label{ex:sharp-hk}
Let $\mathcal F_1=\{\varnothing,\Omega\}$ and $\mathcal F_2=\mathcal F_\infty=\{\varnothing,E,E^c,\Omega\}$. Fix $\lambda>0$ and define
\[
 f:=\frac{\lambda}{p}\mathbf1_E.
\]
Then $\|f\|_1=\lambda$, $f_1=\lambda$, and $f_2=f$. Hence $r=2$ on $E$ and $r=\infty$ on $E^c$. The only nonzero crossing increment is
\[
 \varepsilon_2=(f_2-f_1)\mathbf1_{\{r=2\}}=\Bigl(\frac{\lambda}{p}-\lambda\Bigr)\mathbf1_E.
\]
Since $\mathcal F_1$ is trivial,
\[
 \Lambda_1=\mathbb E[\varepsilon_2]=(1-p)\lambda.
\]

\smallskip
\noindent\emph{Sharpness of the constant $2$ in Theorem~\ref{thm:stein-gundy}\textup{(b)}.}
If $\theta\ge1-p$, then $\Lambda_1\le\theta\lambda$, so $s_{\theta}=\infty$ and $t_{\theta}=r$. The construction gives $g_{\theta}=0$ and
\[
 h_{\theta}=\varepsilon_2-\mathbb E[\varepsilon_2]
 =\Bigl(\frac{\lambda}{p}-(2-p)\lambda\Bigr)\mathbf1_E-(1-p)\lambda\mathbf1_{E^c}.
\]
Since $\mathbb E_1[h_{\theta}]=0$ and $\mathbb E_2[h_{\theta}]=h_{\theta}$,
\[
 \sum_{n\ge1}|\mathbb E_n[h_{\theta}]-\mathbb E_{n-1}[h_{\theta}]|=|h_{\theta}|.
\]
Hence
\begin{align*}
 \Bigl\|\sum_{n\ge1}|\mathbb E_n[h_{\theta}]-\mathbb E_{n-1}[h_{\theta}]|\Bigr\|_1
 &=\|h_{\theta}\|_1 \\
 &=p\Bigl(\frac{\lambda}{p}-(2-p)\lambda\Bigr)+(1-p)^2\lambda \\
 &=2(1-p)^2\lambda
 =2(1-p)^2\|f\|_1\xrightarrow[p\to0]{}2\|f\|_1.
\end{align*}
Thus the coefficient $2$ in Theorem~\ref{thm:stein-gundy}\textup{(b)} is sharp.

\smallskip
\noindent\emph{The parametric $L^\infty$ bound for $k_{\theta}$.}
Still assuming $\theta\ge1-p$, one finds
\[
 k_{\theta}=f-h_{\theta}=(2-p)\lambda\mathbf1_E+(1-p)\lambda\mathbf1_{E^c},
\]
so $\|k_{\theta}\|_\infty=(2-p)\lambda$. If $0\le\theta<1$, choose $p=1-\theta$. Then $\Lambda_1=\theta\lambda$, $s_{\theta}=\infty$, and
\[
 \|k_{\theta}\|_\infty=(1+\theta)\lambda.
\]
Thus the coefficient $1+\theta$ in Proposition~\ref{prop:theta-family} is sharp for $0\le\theta<1$. For $\theta=1$, the same example gives $\|k_{\theta}\|_\infty=(2-p)\lambda\to2\lambda$ as $p\downarrow0$, so the coefficient $2$ cannot be improved.

Likewise,
\[
 k^{\mathrm{st}}_{\theta}=\lambda\mathbf1_E,
 \qquad
 k^{\mathrm{pr}}_{\theta}=(1-p)\lambda.
\]
Hence $\|k^{\mathrm{st}}_{\theta}\|_\infty=\lambda$, and for $0\le\theta<1$ choosing $p=1-\theta$ gives $\|k^{\mathrm{pr}}_{\theta}\|_\infty=\theta\lambda$. Thus the individual bounds for $k^{\mathrm{st}}_{\theta}$ and $k^{\mathrm{pr}}_{\theta}$ are also sharp in this range; at $\theta=1$ one again obtains asymptotic sharpness.
\end{example}

The preceding examples only concern the stopping-time decomposition. The next proposition is global: on the two-point filtration it establishes a lower-bound dichotomy for arbitrary decompositions we consider.

\begin{proposition}
\label{prop:global-joint-sharpness}
Let $\Omega=E\sqcup E^c$ with $\mathbb P(E)=p\in(0,\tfrac12]$, and define
\[
 \mathcal F_1=\{\varnothing,\Omega\},
 \qquad
 \mathcal F_n=\mathcal F_\infty=\{\varnothing,E,E^c,\Omega\},\qquad n\ge2.
\]
Fix $\lambda>0$ and let
\[
 f:=\frac{\lambda}{p}\mathbf1_E,
 \qquad
 \|f\|_1=\lambda.
\]
Assume that
\[
 f=g+h+k
\]
with $g,h,k\in L^1(\mathcal F_\infty)$ and
\[
 0\le k\le \beta\lambda\qquad\text{a.s.}
\]
for some $\beta\in[0,p^{-1}]$. Then one of the following alternatives holds:

\smallskip
\noindent\textup{(i)}
\[
 \mathbb P\Bigl(\sup_{n\ge1}|\mathbb E_n[g]|>0\Bigr)=1;
\]

\noindent\textup{(ii)} $g=0$ almost surely and
\begin{equation}
\label{eq:global-joint-piecewise}
 \Bigl\|\sum_{n=1}^{\infty}|\mathbb E_n[h]-\mathbb E_{n-1}[h]|\Bigr\|_1
 \ge
 \begin{cases}
 (3-\beta-2p)\lambda, & 0\le \beta\le1,\\[4pt]
 2(1-p\beta)\lambda, & 1\le \beta\le p^{-1}.
 \end{cases}
\end{equation}
Both lower bounds are sharp.
\end{proposition}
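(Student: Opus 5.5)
The plan is to exploit the fact that on this two-point filtration everything reduces to finitely many real parameters. Since $\mathcal F_1$ is trivial and $\mathcal F_n=\mathcal F_\infty$ for $n\ge2$, we have $\mathbb E_1[g]=\mathbb E[g]$ (a constant), $\mathbb E_n[g]=g$ for $n\ge2$, and likewise $\mathbb E_1[h]=\mathbb E[h]$, $\mathbb E_n[h]=h$ for $n\ge2$. Consequently
\[
\sum_{n\ge1}|\mathbb E_n[h]-\mathbb E_{n-1}[h]|=|\mathbb E[h]|+|h-\mathbb E[h]|.
\]

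\emph{Step 1: the dichotomy for $g$.} If $\mathbb E[g]\neq0$, then $|\mathbb E_1[g]|>0$ on all of $\Omega$, which is alternative (i). Suppose instead that $\mathbb E[g]=0$ but $g\not\equiv0$. Since $g$ is constant on $E$ and on $E^c$, and both atoms have positive probability, a mean-zero $g$ must take nonzero values of opposite signs on the two atoms. Hence $g\neq0$ everywhere, and (i) holds again. Otherwise $g=0$ a.s.

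\emph{Step 2: reduce (ii) to a finite-dimensional minimization.} Assume $g=0$, so $h=f-k$. Write $k=a\mathbf 1_E+b\mathbf 1_{E^c}$ with $a,b\in[0,\beta\lambda]$. Then
\[
\mathbb E[h]=\lambda-pa-(1-p)b,
\]
and a direct computation gives
\[
h-\mathbb E[h]=(1-p)\bigl(\tfrac{\lambda}{p}-a+b\bigr)\ \text{on }E,\qquad
h-\mathbb E[h]=-(\lambda-pa+pb)\ \text{on }E^c.
\]
Therefore the quantity to be bounded below is
\[
S(a,b)=2(1-p)(\lambda-pa+pb)+\bigl|\lambda-pa-(1-p)b\bigr|.
\]
Here we used $\lambda-pa+pb\ge0$, which holds because $a\le\beta\lambda\le\lambda/p$.

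\emph{Step 3: minimize $S$ over the square $[0,\beta\lambda]^2$.} The function $S$ is piecewise linear, with a single kink along the line $\lambda-pa=(1-p)b$. We carry out the minimization by a sign analysis of the partial derivatives.

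In $a$, both pieces give nonpositive slope: $-2p(1-p)\mp p$ is at most $-2p(1-p)+p$, and on the relevant side of the kink the slope is negative. So we push $a$ to $\beta\lambda$.

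In $b$, below the kink the slope is $2p(1-p)-(1-p)\le0$, since $p\le\tfrac12$. Above the kink the slope is positive. So the optimal $b$ is either $\beta\lambda$ or the kink value, whichever is smaller.

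If $\beta\le1$, the point $a=b=\beta\lambda$ lies on the side where $\lambda-pa-(1-p)b=(1-\beta)\lambda\ge0$. This gives $S=2(1-p)\lambda+(1-\beta)\lambda=(3-\beta-2p)\lambda$.

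If $\beta\ge1$, we restrict to the kink line $b=(\lambda-pa)/(1-p)$. There $S=2(\lambda-pa)$, which at $a=\beta\lambda$ equals $2(1-p\beta)\lambda$. The corresponding $b=(1-p\beta)\lambda/(1-p)$ satisfies $b\le\beta\lambda$ exactly when $\beta\ge1$, so this point is admissible.

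Sharpness follows by exhibiting these minimizers: take $g=0$, $k$ as above, and $h=f-k$. The main obstacle is doing the piecewise-linear minimization cleanly and checking that the minimizers are admissible. This is where the hypotheses $p\le\tfrac12$ and $\beta\le p^{-1}$ enter.
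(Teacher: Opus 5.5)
Your proposal is correct and follows essentially the paper's proof: the same dichotomy for $g$, the same parametrization $k=a\mathbf1_E+b\mathbf1_{E^c}$ leading to the same piecewise-linear function, and the same extremal choices of $k$ for sharpness. The only difference is the order of minimization: you use monotonicity in $a$ on the whole square and then minimize the convex function $b\mapsto S(\beta\lambda,b)$, whereas the paper minimizes in $b$ on the regions $D_\pm$ to reach the line $pa+(1-p)b=\lambda$ and then in $a$ along that line; your ordering handles the box constraint $b\le\beta\lambda$ somewhat more transparently.
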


\begin{proof}
Write $q:=1-p$. Every $u\in L^1(\mathcal F_\infty)$ takes only two values, and hence
\[
 u=u_E\mathbf1_E+u_{E^c}\mathbf1_{E^c}.
\]
Since the filtration stabilizes at $n = 2$,
\[
 \mathbb E_1[u]=\mathbb E[u]=pu_E+qu_{E^c},
 \qquad
 \mathbb E_n[u]=u\quad(n\ge2).
\]
Therefore,
\[
 \Bigl\{\sup_{n\ge1}|\mathbb E_n[u]|>0\Bigr\}
 =\{u\ne0\}\cup\{\mathbb E[u]\ne0\}.
\]
If $\mathbb E[u]\ne0$, then
\[
\mathbb P\Bigl(\sup_{n\ge1}|\mathbb E_n[u]|>0\Bigr)=1.
\]
If $\mathbb E[u] = 0$, then $p u_E + q u_{E^c} = 0$. Hence either $u_E=u_{E^c}=0$,
in which case $u=0$ a.s., or $u_E$ and $u_{E^c}$ have opposite signs, so
$u\ne0$ on both $E$ and $E^c$, and again
\[
\mathbb P\Bigl(\sup_{n\ge1}|\mathbb E_n[u]|>0\Bigr)=1.
\]
Applying this to $u=g$ shows that
\[
 \mathbb P\Bigl(\sup_{n\ge1}|\mathbb E_n[g]|>0\Bigr)<1
 \quad\Longrightarrow\quad
 g=0\quad\text{a.s.}
\]
Thus alternative \textup{(i)} fails only when $g=0$.

Assume from now on that $g=0$. Write
\[
 k=a\mathbf1_E+b\mathbf1_{E^c},
 \qquad 0\le a,b\le\beta\lambda.
\]
Then
\[
 h=\Bigl(\frac{\lambda}{p}-a\Bigr)\mathbf1_E-b\mathbf1_{E^c}.
\]
Since $\mathbb E_0\equiv0$, $\mathbb E_1[h]=\mathbb E[h]$, and $\mathbb E_n[h]=h$ for $n\ge2$,
\[
 \sum_{n=1}^{\infty}|\mathbb E_n[h]-\mathbb E_{n-1}[h]|
 =|\mathbb E[h]|+|h-\mathbb E[h]|.
\]
Set
\[
 x:=\frac{\lambda}{p}-a,
 \qquad
 y:=-b.
\]
Then $h=x\mathbf1_E+y\mathbf1_{E^c}$ and
\[
 \mathbb E[h]=px+qy=\lambda-pa-qb.
\]
Moreover,
\[
 x-\mathbb E[h]=q(x-y),
 \qquad
 y-\mathbb E[h]=-p(x-y),
\]
so
\[
 \mathbb E [|h-\mathbb E[h]|]
 =pq|x-y|+qp|x-y|=2pq|x-y|.
\]
Therefore,
\begin{equation}
\label{eq:Vab-formula}
 \Bigl\|\sum_{n=1}^{\infty}|\mathbb E_n[h]-\mathbb E_{n-1}[h]|\Bigr\|_1
 =|\lambda-pa-qb|+2pq\Bigl|\frac{\lambda}{p}-a+b\Bigr|.
\end{equation}
Since $0\le a,b\le\beta\lambda$ and $\beta\le p^{-1}$,
\[
 \frac{\lambda}{p}-a+b\ge \lambda\Bigl(\frac1p-\beta\Bigr)\ge0,
\]
so the second absolute value in \eqref{eq:Vab-formula} can be dropped.

If $0\le\beta\le1$, then
\[
 \lambda-pa-qb\ge \lambda-\beta\lambda\ge0,
\]
so both absolute values can be removed and
\[
 \Bigl\|\sum_{n=1}^{\infty}|\mathbb E_n[h]-\mathbb E_{n-1}[h]|\Bigr\|_1
 =\lambda-pa-qb+2pq\Bigl(\frac{\lambda}{p}-a+b\Bigr).
\]
Because $p\le\tfrac12$, the right-hand side is decreasing in both $a$ and $b$, hence minimized at $a=b=\beta\lambda$. This gives
\[
 \Bigl\|\sum_{n=1}^{\infty}|\mathbb E_n[h]-\mathbb E_{n-1}[h]|\Bigr\|_1
 \ge(1-\beta)\lambda+2(1-p)\lambda
 =(3-\beta-2p)\lambda.
\]
Equality holds for
\[
 g=0,
 \qquad
 k=\beta\lambda\mathbf1_\Omega,
 \qquad
 h=f-k.
\]

Assume next that $1\le\beta\le p^{-1}$. Define
\[
 D_+:=\{(a,b):0\le a,b\le\beta\lambda,\ pa+qb\le\lambda\},
\]
\[
 D_-:=\{(a,b):0\le a,b\le\beta\lambda,\ pa+qb\ge\lambda\}.
\]
If $(a,b)\in D_+$, then
\[
 \Phi(a,b):=|\lambda-pa-qb|+2pq\Bigl(\frac{\lambda}{p}-a+b\Bigr)
 =\lambda-pa-qb+2pq\Bigl(\frac{\lambda}{p}-a+b\Bigr).
\]
For fixed $a$, the map $b\mapsto\Phi(a,b)$ is decreasing on the corresponding slice of $D_+$, so its minimum is attained on the boundary $pa+qb=\lambda$. If $(a,b)\in D_-$, then
\[
 \Phi(a,b)=pa+qb-\lambda+2pq\Bigl(\frac{\lambda}{p}-a+b\Bigr),
\]
and for fixed $a$ the map $b\mapsto\Phi(a,b)$ is increasing on the corresponding slice of $D_-$. Hence the global minimum is attained on the same boundary line $pa+qb=\lambda$.

Along this boundary one has
\[
 \Phi(a,b)=2pq\Bigl(\frac{\lambda}{p}-a+b\Bigr)=2(\lambda-pa),
\]
which is decreasing in $a$. Since $a\le\beta\lambda$, the minimum is attained at
\[
 a=\beta\lambda,
 \qquad
 b=\frac{\lambda-p\beta\lambda}{q}.
\]
This choice is admissible because $\beta\ge1$ and $\beta\le p^{-1}$. Therefore,
\[
 \Bigl\|\sum_{n=1}^{\infty}|\mathbb E_n[h]-\mathbb E_{n-1}[h]|\Bigr\|_1
 \ge2(\lambda-p\beta\lambda)
 =2(1-p\beta)\lambda.
\]
Equality holds for
\[
 g=0,
 \qquad
 k=\beta\lambda\mathbf1_E+\frac{1-p\beta}{1-p}\lambda\mathbf1_{E^c},
 \qquad
 h=f-k.
\]
This proves \eqref{eq:global-joint-piecewise}.
\end{proof}

\begin{corollary}
\label{cor:global-joint-sharpness}
In the setting of Proposition~\ref{prop:global-joint-sharpness}, take $\beta=1$. Then every decomposition
\[
 f=g+h+k,
 \qquad 0\le k\le\lambda,
\]
satisfies
\[
 \mathbb P\Bigl(\sup_{n\ge1}|\mathbb E_n[g]|>0\Bigr)=1
 \qquad\text{or}\qquad
 \Bigl\|\sum_{n=1}^{\infty}|\mathbb E_n[h]-\mathbb E_{n-1}[h]|\Bigr\|_1
 \ge2(1-p)\|f\|_1.
\]
Consequently, no pair of constants $c_1<1$ and $c_2<2$ can simultaneously replace the coefficients in Theorem~\ref{thm:stein-gundy}\textup{(a)--(b)} under the same condition $0\le k\le\lambda$.
\end{corollary}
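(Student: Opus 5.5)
The plan is to specialize Proposition~\ref{prop:global-joint-sharpness} to $\beta=1$ and then read off the consequence for the constants. Since $p\in(0,\tfrac12]$, we have $1\le p^{-1}$, so $\beta=1$ is admissible. The decomposition $f=g+h+k$ with $0\le k\le\lambda$ falls under the hypotheses with $\beta\lambda=\lambda$. The proposition then gives a dichotomy.

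Either alternative (i) holds, which is the first conclusion. Or $g=0$ and, by \eqref{eq:global-joint-piecewise}, the variation of $h$ is at least $(3-\beta-2p)\lambda$. At $\beta=1$ both branches of the piecewise bound agree and give $2(1-p)\lambda$. Since $\|f\|_1=\lambda$, this equals $2(1-p)\|f\|_1$, which is the second conclusion.

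For the ``consequently'' part, suppose that constants $c_1<1$ and $c_2<2$ worked for every $f\ge0$, every filtration and every $\lambda>0$. That is, some decomposition would satisfy $\mathbb P(\sup_n|\mathbb E_n[g]|>0)\le c_1\|f\|_1/\lambda$, the variation of $h$ would be at most $c_2\|f\|_1$, and $0\le k\le\lambda$.

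Apply this to the two-point example of the proposition, where $\|f\|_1/\lambda=1$. The first bound gives probability at most $c_1<1$, which excludes alternative (i). Hence $g=0$, and the variation of $h$ is at least $2(1-p)\|f\|_1$. Now choose $p\in(0,\tfrac12]$ small enough that $2(1-p)>c_2$, for instance $p<1-c_2/2$ (this also needs $p\le\tfrac12$, which is always possible). This contradicts the assumed bound $c_2\|f\|_1$.

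There is no real obstacle here. The only points to check are that $\beta=1$ lies in the allowed range $[0,p^{-1}]$, and that the ratio $\|f\|_1/\lambda=1$ makes a bound with $c_1<1$ incompatible with alternative (i).
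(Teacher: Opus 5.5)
Your proposal is correct and follows essentially the same route as the paper. You specialize Proposition~\ref{prop:global-joint-sharpness} to $\beta=1$, where both branches give $2(1-p)\lambda=2(1-p)\|f\|_1$. You then pick $p\in(0,\tfrac12]$ with $c_2<2(1-p)$ and use $c_1\|f\|_1/\lambda=c_1<1$ to exclude alternative (i), which yields the contradiction.
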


\begin{proof}
The first assertion is Proposition~\ref{prop:global-joint-sharpness} with $\beta=1$ and $\|f\|_1=\lambda$. Now let $c_1<1$ and $c_2<2$. Choose $p\in(0,\tfrac12]$ so small that $c_2<2(1-p)$. If there were a decomposition such that
\[
 \mathbb P\Bigl(\sup_{n\ge1}|\mathbb E_n[g]|>0\Bigr)
 \le c_1\frac{\|f\|_1}{\lambda}=c_1<1
\]
and
\[
 \Bigl\|\sum_{n=1}^{\infty}|\mathbb E_n[h]-\mathbb E_{n-1}[h]|\Bigr\|_1
 \le c_2\|f\|_1,
\]
then Proposition~\ref{prop:global-joint-sharpness} would force
\[
 c_2\|f\|_1\ge2(1-p)\|f\|_1,
\]
contrary to the choice of $p$.
\end{proof}

\begin{remark}
\label{rem:global-joint-sharpness}
Corollary~\ref{cor:global-joint-sharpness} is a global statement concerning all decompositions $f=g+h+k$ on the two-point filtration. While one may suppress
the localized term at the cost of enlarging $h$, the two coefficients cannot be improved simultaneously under the constraint $0\le k\le\lambda$.

The same proposition also applies to the parametric condition $0\le k\le(1+\theta)\lambda$: one simply takes $\beta=1+\theta$. Thus, whenever $0<p\le(1+\theta)^{-1}$ and
\[
 \mathbb P\Bigl(\sup_{n\ge1}|\mathbb E_n[g]|>0\Bigr)<1,
\]
one has
\[
 \Bigl\|\sum_{n=1}^{\infty}|\mathbb E_n[h]-\mathbb E_{n-1}[h]|\Bigr\|_1
 \ge2\bigl(1-(1+\theta)p\bigr) \, \|f\|_1.
\]
In particular,
\[
 \liminf_{p\downarrow0}\frac1{\|f\|_1}
 \Bigl\|\sum_{n=1}^{\infty}|\mathbb E_n[h]-\mathbb E_{n-1}[h]|\Bigr\|_1
 \ge2.
\]
Thus enlarging the $L^\infty$ allowance on the bounded term by any fixed factor does not remove the asymptotic cost $2\|f\|_1$ in the $h$ term when one insists on eliminating the localized part.
\end{remark}

\section{A weak-type $(1,1)$ estimate for truncated martingale multipliers}
\label{sec:multipliers}

Let $a=(a_n)_{n\ge1}\in\ell^\infty$. For $N\ge1$ and $f\in L^1(\Omega,\mathbb P)$, define the truncated martingale multiplier
\[
 T_N(a;f):=\sum_{n=1}^{N}a_n\,d f_n,
 \qquad
 d f_n:=\mathbb E_n[f]-\mathbb E_{n-1}[f].
\]

\begin{theorem}
\label{thm:weak11}
For every $N\ge1$, every $f\in L^1(\Omega,\mathbb P)$, and every $\lambda>0$,
\[
 \mathbb P\bigl(|T_N(a;f)|>\lambda\bigr)
 \le \frac{16\|a\|_{\ell^\infty}\|f\|_1}{\lambda}.
\]
\end{theorem}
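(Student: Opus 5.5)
Since $T_N(a;f)$ depends on $f$ only through the differences $df_1,\dots,df_N$, we may replace $f$ by $f_N=\mathbb E_N[f]\in L^1(\mathcal F_N)$. This satisfies $\|f_N\|_1\le\|f\|_1$ and $T_N(a;f)=T_N(a;f_N)$, so it suffices to work with the finite filtration $(\mathcal F_n)_{n=1}^N$, where the first remark after Corollary~\ref{cor:stein-gundy} applies without change of constants. By homogeneity we may also assume $\|a\|_{\ell^\infty}=1$.

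Rather than using Corollary~\ref{cor:stein-gundy} as a black box, I would apply Theorem~\ref{thm:stein-gundy} separately to $f_N^+$ and $f_N^-$. This gives $f_N^\pm=g^\pm+h^\pm+k^\pm$, and we set $g=g^+-g^-$, $h=h^+-h^-$, $k=k^+-k^-$. Adding the estimates for the two parts, and using $\|f_N^+\|_1+\|f_N^-\|_1=\|f_N\|_1$, gives three bounds:
\[
\mathbb P\Bigl(\sup_n|\mathbb E_n g^+|>0\Bigr)+\mathbb P\Bigl(\sup_n|\mathbb E_n g^-|>0\Bigr)\le\frac{\|f\|_1}{\lambda},
\qquad
\Bigl\|\sum_n|dh_n|\Bigr\|_1\le2\|f\|_1 .
\]
For the third, since $0\le k^\pm\le\lambda$, we have $k^+k^-\ge0$, hence $(k^+-k^-)^2\le (k^+)^2+(k^-)^2$ and therefore $\|k\|_2^2\le\lambda\|f\|_1$.

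Next I would split $T_N f=T_N g+T_N h+T_N k$ by linearity and estimate each piece.

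\emph{The $g$ piece.} Off the union of the two localization sets, every $\mathbb E_n[g^\pm]$ vanishes. Hence every $dg_n$ vanishes there, and so does $T_Ng$. This gives $\mathbb P(T_Ng\ne0)\le\|f\|_1/\lambda$.

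\emph{The $h$ piece.} Pointwise, $|T_Nh|\le\sum_n|dh_n|$. Chebyshev's inequality then gives $\mathbb P(|T_Nh|>\lambda/2)\le 4\|f\|_1/\lambda$.

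\emph{The $k$ piece.} Martingale differences are orthogonal in $L^2$ (here $k\in L^2$ because it is bounded), so
\[
\|T_Nk\|_2^2=\sum_{n\le N}a_n^2\|dk_n\|_2^2\le\|k\|_2^2\le\lambda\|f\|_1 .
\]
Chebyshev's inequality gives $\mathbb P(|T_Nk|>\lambda/2)\le 4\|f\|_1/\lambda$.

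Summing the three bounds yields $9\|f\|_1/\lambda$, which is below the claimed $16\|f\|_1/\lambda$. If one instead uses Corollary~\ref{cor:stein-gundy} directly, the constants come out as $2+8+8=18$, which exceeds $16$. So pairing the positive and negative parts as above is where the constant is actually won, and I would present it that way to be safe.

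The main point to check carefully is the $k$-step. One needs the identity $\mathbb E_n[k]=k_n$ on the finite filtration, so that $T_Nk$ really is a transform of the martingale differences of $k$ and $L^2$ orthogonality applies. One also needs the pointwise inequality $(k^+-k^-)^2\le(k^+)^2+(k^-)^2$, which relies on the nonnegativity of both bounded parts.
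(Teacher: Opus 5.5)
Your proof is correct and follows the same route as the paper: a Gundy--Stein decomposition, $T_N(a;g)=0$ off the localization set, Markov's inequality for the $h$ term, and $L^2$ orthogonality for the $k$ term. The only difference is bookkeeping: using $\|f_N^+\|_1+\|f_N^-\|_1=\|f_N\|_1\le\|f\|_1$ avoids the doubled constants of Corollary~\ref{cor:stein-gundy} and gives $9$ instead of $16$. One correction to your side remark: the paper does use Corollary~\ref{cor:stein-gundy} directly, but at level $\lambda/2$ rather than $\lambda$, which gives exactly $4+8+4=16$; so treating $f_N^{\pm}$ separately is what pushes the constant below $16$, not what is needed to reach it.
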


\begin{proof}
By homogeneity, it is enough to assume $\|a\|_{\ell^\infty}=1$. Replacing $f$ by $\mathbb E[f\mid\mathcal F_\infty]$ does not change the martingale differences and does not increase the $L^1$ norm, so we may assume $f\in L^1(\mathcal F_\infty)$.

Apply Corollary~\ref{cor:stein-gundy} to $f$ at level $\lambda/2$, obtaining
\[
 f=g+h+k.
\]
Then
\begin{equation}
\label{eq:split}
\begin{aligned}
\mathbb P\bigl(|T_N(a;f)|>\lambda\bigr)
&\le \mathbb P\bigl(|T_N(a;g)|>0\bigr)\\
&\quad +\mathbb P\bigl(|T_N(a;h)|>\lambda/2\bigr)
 +\mathbb P\bigl(|T_N(a;k)|>\lambda/2\bigr).
\end{aligned}
\end{equation}
If $\sup_{n\ge1}|\mathbb E_n[g]|=0$, then all martingale differences of $g$ vanish and therefore $T_N(a;g)=0$. Hence, by Corollary~\ref{cor:stein-gundy}\textup{(a$'$)},
\begin{equation}
\label{eq:term-g}
\mathbb P\bigl(|T_N(a;g)|>0\bigr)
\le \mathbb P\Bigl(\sup_{n\ge1}|\mathbb E_n[g]|>0\Bigr)
\le \frac{4\|f\|_1}{\lambda}.
\end{equation}
Next,
\[
 |T_N(a;h)|\le \sum_{n=1}^{N}|d h_n|\le \sum_{n=1}^{\infty}|d h_n|,
\]
so Markov's inequality and Corollary~\ref{cor:stein-gundy}\textup{(b$'$)} give
\begin{equation}
\label{eq:term-h}
\mathbb P\bigl(|T_N(a;h)|>\lambda/2\bigr)
\le \frac{2}{\lambda}\Bigl\|\sum_{n=1}^{\infty}|d h_n|\Bigr\|_1
\le \frac{8\|f\|_1}{\lambda}.
\end{equation}
Finally, since $k_n=\mathbb E_n[k]$ and martingale differences are orthogonal in $L^2$,
\[
 \|T_N(a;k)\|_2^2
 =\sum_{n=1}^{N}|a_n|^2\,\|\mathbb E_n[k]-\mathbb E_{n-1}[k]\|_2^2
 \le \|k\|_2^2.
\]
Therefore, by Markov's inequality and Corollary~\ref{cor:stein-gundy}\textup{(c$'$)},
\begin{equation}
\label{eq:term-k}
\mathbb P\bigl(|T_N(a;k)|>\lambda/2\bigr)
\le \frac{4\|T_N(a;k)\|_2^2}{\lambda^2}
\le \frac{4\|k\|_2^2}{\lambda^2}
\le \frac{4\|f\|_1}{\lambda}.
\end{equation}
Combining \eqref{eq:split}, \eqref{eq:term-g}, \eqref{eq:term-h}, and \eqref{eq:term-k} completes the proof.
\end{proof}

\begin{remark}
Standard Marcinkiewicz interpolation, together with the estimate $\|T_N(a;f)\|_2\le \|a\|_{\ell^\infty}\|f\|_2$, yields the usual $L^p$ boundedness of martingale transforms for $1<p<\infty$. We now adopt a different point of view and fix $f\in L^2(\Omega,\mathbb P)$.
Then $T_N (a;f)$ is the martingale transform of $f$ associated with $a$, and
\begin{equation}\label{eq:ito}
\|T_N (a;f)\|_2^2
=
\sum_{n=1}^{N} a_n^2\,\mathbb E\bigl[|d f_n|^2\bigr],
\end{equation}
which may be viewed as a discrete analogue of It\^o's isometry. Interpolating Theorem~\ref{thm:weak11} with \eqref{eq:ito}, and viewing $T_N (a;f)$ as an operator acting on $a$, we obtain for $1<p\le2$,
\[
\|T_N (a;f)\|_p
\le
C_p\,
\|f\|_1^{\frac{2}{p}-1}
\Big(\sum_{n=1}^{N} |a_n|^{p'}\, \mathbb E\bigl[|d f_n|^2\bigr]\Big)^{1/p'}.
\]
Unlike the usual $L^p$ boundedness of martingale transforms, which depends on $\|f\|_p$, this estimate is expressed in terms of the quadratic energies $\mathbb E[|d f_n|^2]$.
\end{remark}

\section{A martingale John--Nirenberg inequality}
\label{sec:SGJN}

John--Nirenberg inequalities for martingale $\mathrm{BMO}$ are classical; see Garsia \cite[Ch.~III]{Garsia1973} and Kazamaki \cite{Kazamaki1994}. In this section we prove an explicit form on atomic $\alpha$-regular filtrations.

Throughout this section we assume that each $\mathcal F_n$ is generated by a countable measurable partition $\mathcal A_n$ of $\Omega$ into atoms, and that $\mathbb P(A)>0$ for every $A\in\mathcal A_n$.

For $A\in\mathcal A_n$ and $f\in L^1(\Omega,\mathbb P)$, write
\[
 f_A:=\frac{1}{\mathbb P(A)}\,\mathbb E[f \, \mathbf1_A].
\]
Then
\[
 f_n=\mathbb E_n[f]=\sum_{A\in\mathcal A_n}f_A\mathbf1_A.
\]

\begin{definition}
\label{def:BMO-mg}
For $f\in L^1(\mathcal F_\infty)$, define
\begin{equation}
\label{eq:BMO}
 \|f\|_{\mathrm{BMO}}
 :=\sup_{n\ge1}\sup_{A\in\mathcal A_n}
 \frac{1}{\mathbb P(A)}\,\mathbb E\bigl[|f-f_A|\, \mathbf1_A\bigr]
 =\sup_{n\ge1}\bigl\|\mathbb E_n[|f-f_n|]\bigr\|_\infty.
\end{equation}
We write $f\in\mathrm{BMO}(\Omega,\mathcal F_\bullet)$ if $\|f\|_{\mathrm{BMO}}<\infty$.
\end{definition}

\begin{definition}
\label{def:regular-filtration}
An atomic filtration $(\mathcal F_n)_{n\ge1}$ is called $\alpha$-regular if there exists $\alpha\in(0,1]$ such that for every $n\ge2$, every parent atom $P\in\mathcal A_{n-1}$, and every child atom $C\in\mathcal A_n$ with $C\subseteq P$,
\begin{equation}
\label{eq:alpha-regular}
 \mathbb P(C)\ge \alpha\,\mathbb P(P).
\end{equation}
\end{definition}

\begin{lemma}
\label{lem:overshoot}
Assume that $(\mathcal F_n)_{n\ge1}$ is an $\alpha$-regular atomic filtration. Let $f\in L^1(\Omega,\mathbb P)$ with $f\ge0$, and set $f_n=\mathbb E_n[f]$. Then for every $n\ge2$,
\begin{equation}
\label{eq:overshoot}
 f_n\le \alpha^{-1}f_{n-1}
 \qquad\text{a.s.}
\end{equation}
\end{lemma}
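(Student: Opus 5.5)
The plan is to argue atom by atom, using the explicit representation $f_n=\sum_{A\in\mathcal A_n}f_A\mathbf1_A$. Since $f_n$ and $f_{n-1}$ are constant on atoms of $\mathcal A_n$ and $\mathcal A_{n-1}$ respectively, and $\mathcal A_n$ refines $\mathcal A_{n-1}$ (because $\mathcal F_{n-1}\subseteq\mathcal F_n$ and both are generated by atoms of positive probability), it suffices to fix a child atom $C\in\mathcal A_n$ and its parent $P\in\mathcal A_{n-1}$ with $C\subseteq P$. We then compare the constant values $f_C$ and $f_P$. The a.s.\ statement follows because the atoms form a countable partition, so the exceptional set is a countable union of null sets (in fact empty up to the version chosen).

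The key inequality uses only $f\ge0$ and the regularity condition \eqref{eq:alpha-regular}:
\[
 f_C=\frac{\mathbb E[f\mathbf1_C]}{\mathbb P(C)}\le\frac{\mathbb E[f\mathbf1_P]}{\mathbb P(C)}
 \le\frac{\mathbb E[f\mathbf1_P]}{\alpha\,\mathbb P(P)}=\alpha^{-1}f_P .
\]
The first inequality uses $f\mathbf1_C\le f\mathbf1_P$, which holds because $f\ge0$ and $C\subseteq P$. The second uses $\mathbb P(C)\ge\alpha\mathbb P(P)>0$. On $C$ we have $f_n=f_C$ and $f_{n-1}=f_P$, which gives \eqref{eq:overshoot} on $C$. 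Taking the union over all $C\in\mathcal A_n$ yields the claim on all of $\Omega$.

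The only point requiring care is that every $C\in\mathcal A_n$ lies inside a single $P\in\mathcal A_{n-1}$. This follows from the nestedness of the filtration. Each $P\in\mathcal A_{n-1}$ belongs to $\mathcal F_n$, so it is a countable union of atoms of $\mathcal A_n$. Hence each atom $C\in\mathcal A_n$ is either contained in $P$ or disjoint from it, and since the $P$'s partition $\Omega$, exactly one contains $C$. We do not need $f\in L^1(\mathcal F_\infty)$ here; integrability of $f$ suffices.
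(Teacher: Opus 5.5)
Your proof is correct and follows the paper's argument: fix a child atom $C\subseteq P$, use $f\ge0$ to get $\mathbb E[f\mathbf1_C]\le\mathbb E[f\mathbf1_P]$, and then apply $\mathbb P(C)\ge\alpha\,\mathbb P(P)$. Your extra justification that each atom of $\mathcal A_n$ lies in a unique atom of $\mathcal A_{n-1}$ is a detail the paper leaves implicit, and it is right.
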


\begin{proof}
Fix $n\ge2$ and let $\omega\in C\subseteq P$, where $P\in\mathcal A_{n-1}$ and $C\in\mathcal A_n$. Then
\[
 f_n(\omega)=\frac1{\mathbb P(C)}\mathbb E[f \, \mathbf1_C]
 \le \frac{\alpha^{-1}}{\mathbb P(P)}\mathbb E[f \, \mathbf1_P]
 =\alpha^{-1}f_{n-1}(\omega).
 \qedhere
\]
\end{proof}

\begin{lemma}
\label{lem:martingale-CZ}
Assume that $(\mathcal F_n)_{n\ge1}$ is an $\alpha$-regular atomic filtration. Fix $N\ge1$ and an atom $A\in\mathcal A_N$. Let $g\in L^1(\mathcal F_\infty)$ and $\lambda>0$. If
\begin{equation}
\label{eq:mgCZ-parent-good}
 \frac{1}{\mathbb P(A)}\,\mathbb E[|g| \, \mathbf1_A]\le \lambda,
\end{equation}
then there exists a finite or countable collection of pairwise disjoint atoms $\{A_j\}_{j\in J}$ contained in $A$ such that

\smallskip
\noindent\textup{(i)}
\begin{equation}
\label{eq:mgCZ-avg}
 \lambda<\frac{1}{\mathbb P(A_j)}\,\mathbb E[|g| \, \mathbf1_{A_j}]\le \alpha^{-1}\lambda,
 \qquad j\in J;
\end{equation}

\noindent\textup{(ii)}
\begin{equation}
\label{eq:mgCZ-good}
 |g(\omega)|\le \lambda
 \qquad\text{for a.e. }\omega\in A\setminus\bigcup_{j\in J}A_j;
\end{equation}

\noindent\textup{(iii)}
\begin{equation}
\label{eq:mgCZ-measure}
 \sum_{j\in J}\mathbb P(A_j)
 \le \frac1{\lambda}\,\mathbb E[|g|\, \mathbf1_A].
\end{equation}
\end{lemma}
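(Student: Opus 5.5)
The plan is a local stopping-time (Calderón--Zygmund) selection inside $A$, driven by the martingale $|g|_n:=\mathbb E_n[|g|]$ restricted to $A$. For $n\ge N$ set
\[
 \rho:=\inf\{n\ge N:\ \mathbb E_n[|g|]>\lambda\}\quad\text{on }A,
\]
with $\inf\varnothing=\infty$. Since $\mathbb E_n[|g|]$ is constant on atoms of $\mathcal A_n$, each set $\{\rho=n\}\cap A$ is a union of atoms of $\mathcal A_n$. The collection $\{A_j\}_{j\in J}$ consists of all atoms $C\in\mathcal A_n$, $n\ge N$, with $C\subseteq A\cap\{\rho=n\}$. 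There are countably many, since each $\mathcal A_n$ is countable. They are pairwise disjoint because the sets $\{\rho=n\}$ are disjoint for different $n$, and atoms of a single $\mathcal A_n$ are disjoint. The hypothesis \eqref{eq:mgCZ-parent-good} says exactly that $\mathbb E_N[|g|]\le\lambda$ on $A$, so $\rho>N$ on $A$, and every selected atom lies at level $n\ge N+1$.

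For (i), take a selected atom $C\in\mathcal A_n$. Its average of $|g|$ equals $\mathbb E_n[|g|]$ on $C$, which is $>\lambda$ by the definition of $\rho$. For the upper bound, let $P\in\mathcal A_{n-1}$ be its parent. We have $P\subseteq A$, since $n-1\ge N$ and atoms at level $\ge N$ either lie in $A$ or are disjoint from it. Moreover $\rho>n-1$ on $C$, so the average over $P$ is $\le\lambda$. Lemma~\ref{lem:overshoot} applied to $|g|$ (or directly $\mathbb P(C)\ge\alpha\mathbb P(P)$) then gives the average over $C$ at most $\alpha^{-1}\lambda$.

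For (ii), note that on $A\setminus\bigcup_jA_j$ we have $\rho=\infty$, so $\mathbb E_n[|g|]\le\lambda$ for all $n\ge N$. Because $g\in L^1(\mathcal F_\infty)$, martingale convergence gives $\mathbb E_n[|g|]\to|g|$ a.s., hence $|g|\le\lambda$ a.e.\ there. This is the one step that really uses $\mathcal F_\infty$-measurability, in the same way as Counterexample~\ref{cex:Finfty}.

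For (iii), sum the lower bound in (i):
\[
 \sum_j\mathbb P(A_j)<\frac1\lambda\sum_j\mathbb E[|g|\mathbf1_{A_j}]\le\frac1\lambda\mathbb E[|g|\mathbf1_A].
\]
The last inequality uses disjointness and $A_j\subseteq A$. Equivalently, one can apply Lemma~\ref{lem:optional} to the stopped martingale, as in \eqref{eq:Pr}.

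The main obstacle is modest: checking that the parent atom of each selected atom still lies in $A$ and has average $\le\lambda$. This is where the choice of starting the stopping at level $N$ matters, together with the nestedness of the atomic partitions, which follows from the filtration being increasing and generated by partitions.
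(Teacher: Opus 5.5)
Your proposal is correct and follows essentially the same route as the paper. It uses the same first-passage time for $\mathbb E_n[|g|]$ on $A$ (with the hypothesis forcing selection only at levels $\ge N+1$), the same collection of atoms, martingale convergence for (ii), and $\alpha$-regularity of the parent atom for the upper bound in (i); the only cosmetic difference is in (iii), where you sum the lower bound from (i) over the disjoint selected atoms instead of invoking Lemma~\ref{lem:optional} as the paper does, and this amounts to the same computation.
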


\begin{proof}
Define the nonnegative martingale
\[
 X_m:=\mathbb E_m[|g| \, \mathbf1_A],\qquad m\ge N,
\]
with terminal value $X_\infty:=|g|\mathbf1_A$. Let
\[
 r:=\inf\{m\ge N+1:\ X_m>\lambda\}.
\]
For $m\ge N+1$, set $B_m:=\{r=m\}\in\mathcal F_m$. Since the filtration is atomic, each $B_m\cap A$ is a union of atoms in $\mathcal A_m$ contained in $A$. Collecting these atoms over all $m$ gives a finite or countable family $\{A_j\}_{j\in J}$ of pairwise disjoint atoms such that
\[
 \bigcup_{j\in J}A_j=A\cap\{r<\infty\}.
\]
On $\{r<\infty\}$ one has $X_r>\lambda$, so
\[
 \lambda\,\mathbb P(r<\infty)\le \mathbb E[X_r].
\]
Lemma~\ref{lem:optional}, applied to the closed martingale $(X_m)_{m\ge N}$, yields
\[
 \mathbb E[X_r]=\mathbb E[X_\infty]=\mathbb E[|g| \, \mathbf1_A].
\]
Since $X_m=0$ on $A^c$, one has $r=\infty$ on $A^c$, hence
\[
 \mathbb P(r<\infty)=\sum_{j\in J}\mathbb P(A_j),
\]
which proves \eqref{eq:mgCZ-measure}.

If $\omega\in A$ and $r(\omega)=\infty$, then $X_m(\omega)\le\lambda$ for every $m\ge N$. As $X_m\to X_\infty=|g|\mathbf1_A$ almost surely, this yields \eqref{eq:mgCZ-good}.

Finally, let $A_j\in\mathcal A_m$ with $m\ge N+1$. Then $r=m$ on $A_j$, so $X_m>\lambda$ on $A_j$. Moreover, $X_{m-1}\le\lambda$ on the parent atom in $\mathcal A_{m-1}$ containing $A_j$: for $m\ge N+2$ this follows from the definition of $r$, while for $m=N+1$ it follows from \eqref{eq:mgCZ-parent-good}. Lemma~\ref{lem:overshoot} gives $X_m\le\alpha^{-1}X_{m-1}\le\alpha^{-1}\lambda$ on $A_j$. Since $X_m$ is constant on $A_j$ and equals $\mathbb P(A_j)^{-1}\mathbb E[|g|\mathbf1_{A_j}]$, this proves \eqref{eq:mgCZ-avg}.
\end{proof}

\begin{theorem}
\label{thm:SGJN}
Assume that $(\mathcal F_n)_{n\ge1}$ is an $\alpha$-regular atomic filtration. Let $f\in L^1(\mathcal F_\infty)$ with $\|f\|_{\mathrm{BMO}}<\infty$. Set
\[
 c_1:=e,
 \qquad
 c_2:=\frac{\alpha}{e}.
\]
Then for every $N\ge1$, every atom $A\in\mathcal A_N$, and every $t>0$,
\begin{equation}
\label{eq:SGJN-tail}
 \mathbb P\bigl(\{\omega\in A:\ |f(\omega)-f_A|>t\}\bigr)
 \le c_1\exp\!\left(-c_2\frac{t}{\|f\|_{\mathrm{BMO}}}\right)\mathbb P(A).
\end{equation}
\end{theorem}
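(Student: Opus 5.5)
We will prove the tail bound by the classical John--Nirenberg iteration, using Lemma~\ref{lem:martingale-CZ} as the Calder\'on--Zygmund step. By homogeneity we assume $\|f\|_{\mathrm{BMO}}=1$; the case $\|f\|_{\mathrm{BMO}}=0$ is trivial, since then $f=f_A$ a.e.\ on $A$. For $t>0$ define
\[
 \Phi(t):=\sup_{N\ge1}\sup_{A\in\mathcal A_N}\frac{\mathbb P(\{\omega\in A:|f-f_A|>t\})}{\mathbb P(A)}\le 1 .
\]
The goal is the recursive inequality
\[
 \Phi(t+\alpha^{-1}\lambda)\le \lambda^{-1}\Phi(t),\qquad t>0,\ \lambda\ge1.
\]
With $\lambda=e$ this gives $\Phi(t+e/\alpha)\le e^{-1}\Phi(t)$. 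Iterating from $\Phi\le1$ then yields $\Phi(t)\le e^{-k}$ whenever $t\ge k e/\alpha$. Writing $k=\lfloor \alpha t/e\rfloor$, we have $e^{-k}\le e\cdot e^{-\alpha t/e}$, which is exactly \eqref{eq:SGJN-tail} with $c_1=e$ and $c_2=\alpha/e$.

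To prove the recursion, we fix $N$, an atom $A\in\mathcal A_N$, and $\lambda\ge1$. We apply Lemma~\ref{lem:martingale-CZ} to $g:=(f-f_A)\mathbf1_A$ at level $\lambda$. The hypothesis \eqref{eq:mgCZ-parent-good} holds because $\mathbb P(A)^{-1}\mathbb E[|f-f_A|\mathbf1_A]\le\|f\|_{\mathrm{BMO}}=1\le\lambda$. The lemma gives disjoint atoms $A_j\subseteq A$ with the following properties:
\begin{itemize}
 \item by \eqref{eq:mgCZ-good}, $|f-f_A|\le\lambda$ a.e.\ on $A\setminus\bigcup_j A_j$;
 \item by \eqref{eq:mgCZ-measure}, $\sum_j\mathbb P(A_j)\le\lambda^{-1}\mathbb P(A)$;
 \item by \eqref{eq:mgCZ-avg}, $|f_{A_j}-f_A|\le \mathbb P(A_j)^{-1}\mathbb E[|f-f_A|\mathbf1_{A_j}]\le\alpha^{-1}\lambda$.
\end{itemize}
Hence, for $t>0$, the set $\{\omega\in A:|f-f_A|>t+\alpha^{-1}\lambda\}$ is, up to a null set, contained in $\bigcup_j\{\omega\in A_j:|f-f_{A_j}|>t\}$. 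Here we use $t+\alpha^{-1}\lambda>\lambda$ off the $A_j$, and the triangle inequality on each $A_j$. Each $A_j$ is itself an atom of some $\mathcal A_m$, so the definition of $\Phi$ applies to it. Therefore
\[
 \mathbb P(\{\omega\in A:|f-f_A|>t+\alpha^{-1}\lambda\})\le\Phi(t)\sum_j\mathbb P(A_j)\le\lambda^{-1}\Phi(t)\,\mathbb P(A).
\]
Taking the supremum over $N$ and $A$ gives the recursion.

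The main delicate point is the bookkeeping in the iteration. We need $\Phi$ to be a finite quantity, which is guaranteed by the trivial bound $\Phi\le1$. We also need the supremum to range over atoms at all levels, so that the sub-atoms $A_j$ produced by Lemma~\ref{lem:martingale-CZ} are admissible. The lemma requires $g\in L^1(\mathcal F_\infty)$, which holds because $A\in\mathcal F_N$.

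It remains to match the constants. For $0<t<e/\alpha$ the bound is trivial, since the right-hand side of \eqref{eq:SGJN-tail} exceeds $\mathbb P(A)$ there: $e\cdot e^{-\alpha t/e}>e\cdot e^{-1}=1$. For general $t$, we use the monotonicity of $\Phi$ together with $k=\lfloor\alpha t/e\rfloor\ge\alpha t/e-1$.
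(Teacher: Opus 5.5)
Your argument is correct and gives exactly the paper's constants, but it organizes the iteration differently.

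The paper fixes $s=eB$ and unrolls the Calder\'on--Zygmund step into explicit generations of atoms $E_k$ with $\mathbb P(E_k)\le (B/s)^k\mathbb P(A)$. It then proves, by a telescoping estimate along the chain $A=P_0\supseteq\cdots\supseteq P_\ell$, that $|f-f_A|\le k\alpha^{-1}s$ off $E_k$.

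You apply Lemma~\ref{lem:martingale-CZ} only once. The supremum $\Phi$ over atoms of all levels absorbs the iteration, through the recursion $\Phi(t+\alpha^{-1}\lambda)\le\lambda^{-1}\Phi(t)$ and the a priori bound $\Phi\le1$. The per-step factors are the same as in the paper: $B/s$ in measure and $\alpha^{-1}s$ in oscillation. So keeping $\lambda\ge1$ general, you also recover the one-parameter family of Remark~\ref{rem:SGJN-parameter}.

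What each approach buys:
\begin{itemize}
\item Your route is shorter. It avoids the generation bookkeeping: nestedness of the $k$th-generation atoms and the construction of the chain through $\omega$.
\item The paper's route produces a concrete exceptional set $E_k$ with a geometric measure bound, which is more constructive and can be reused elsewhere.
\end{itemize}

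One small point needs a fix. As stated, your recursion holds only for $t>0$, so iterating it gives $\Phi(t)\le e^{-k}$ for $t>ke/\alpha$, not for $t\ge ke/\alpha$. The equality case $t=ke/\alpha$ would need a base step at $t=0$. Either of the following repairs it:
\begin{itemize}
\item Note that the recursion holds verbatim at $t=0$. Off the $A_j$ you only need $|f-f_A|\le\lambda\le\alpha^{-1}\lambda$, and on $A_j$ you get $|f-f_{A_j}|>0$.
\item Take $m$ to be the largest integer with $me/\alpha<t$. Then $m\ge\alpha t/e-1$, which still gives $\Phi(t)\le e^{1-\alpha t/e}$.
\end{itemize}
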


\begin{proof}
Fix $A\in\mathcal A_N$ and write $B:=\|f\|_{\mathrm{BMO}}$. The case $B=0$ is trivial, so assume $B>0$. Let $s>B$ be a parameter.

Apply Lemma~\ref{lem:martingale-CZ} to $(f-f_A)\mathbf1_A$ at level $\lambda=s$. Since
\[
 \frac1{\mathbb P(A)}\mathbb E[|f-f_A| \, \mathbf1_A]\le B<s,
\]
we obtain pairwise disjoint atoms $\{A_{i_1}\}_{i_1\in\mathbb N}\subseteq A$ such that
\begin{align}
 s<\frac1{\mathbb P(A_{i_1})}\mathbb E[|f-f_A|\mathbf1_{A_{i_1}}]\le \alpha^{-1}s,
\label{eq:gen1-avg}
\\
 |f(\omega)-f_A|\le s
 \qquad\text{for a.e. }\omega\in A\setminus\bigcup_{i_1}A_{i_1}.
\label{eq:gen1-good}
\end{align}
Moreover,
\begin{equation}
\label{eq:star1}
 \sum_{i_1}\mathbb P(A_{i_1})
 \le \frac1s\mathbb E[|f-f_A|\, \mathbf1_A]
 \le \frac{B}{s}\mathbb P(A).
\end{equation}
For each such atom,
\begin{equation}
\label{eq:avg-increment-1}
 |f_{A_{i_1}}-f_A|
 \le \frac1{\mathbb P(A_{i_1})}\mathbb E[|f-f_A| \, \mathbf1_{A_{i_1}}]
 \le \alpha^{-1}s.
\end{equation}

Proceed inductively. Suppose that for some $k\ge1$ we have constructed pairwise disjoint atoms $\{A_{i_1,\dots,i_k}\}$ inside $A$, and set
\[
 E_k:=\bigcup_{i_1,\dots,i_k}A_{i_1,\dots,i_k}.
\]
For each parent $P=A_{i_1,\dots,i_k}$, apply Lemma~\ref{lem:martingale-CZ} to $(f-f_P)\mathbf1_P$ at level $s$. The defining property of $\|f\|_{\mathrm{BMO}}$ implies
\[
 \frac1{\mathbb P(P)}\mathbb E[|f-f_P| \, \mathbf1_P]\le B<s.
\]
Summing \eqref{eq:mgCZ-measure} over all parents yields
\begin{equation}
\label{eq:measure-k}
 \mathbb P(E_k)\le \left(\frac{B}{s}\right)^k\mathbb P(A),
 \qquad k\ge1.
\end{equation}

Fix $k\ge1$ and let $\omega\in A\setminus E_k$. Construct nested atoms
\[
 A=P_0\supseteq P_1\supseteq\cdots\supseteq P_\ell
\]
by letting $P_j$ be the $j$th generation atom containing $\omega$ while $\omega\in E_j$, and stop when $\omega\notin E_{\ell+1}$. Then $\ell\le k-1$. Using \eqref{eq:gen1-good} at the last stage and \eqref{eq:avg-increment-1} at each previous stage, we obtain
\begin{equation}
\label{eq:pointwise-k}
\begin{split}
 |f(\omega)-f_A|
 &\le |f(\omega)-f_{P_\ell}|+\sum_{j=1}^{\ell}|f_{P_j}-f_{P_{j-1}}|\\
 &\le s+\ell\alpha^{-1}s
 \le (\ell+1)\alpha^{-1}s
 \le k\alpha^{-1}s.
\end{split}
\end{equation}

Let $t>0$. If $t\ge \alpha^{-1}s$, choose $k\in\mathbb N$ such that
\begin{equation}
\label{eq:choose-k}
 k\alpha^{-1}s\le t<(k+1)\alpha^{-1}s.
\end{equation}
Then \eqref{eq:pointwise-k} implies
\[
 \{\omega\in A:\ |f(\omega)-f_A|>t\}\subseteq E_k,
\]
so that by \eqref{eq:measure-k},
\begin{equation}
\label{eq:tail-k}
 \mathbb P(\{\omega\in A:\ |f(\omega)-f_A|>t\})
 \le \left(\frac{B}{s}\right)^k\mathbb P(A)
 =\exp\!\left(-k\log\frac{s}{B}\right)\mathbb P(A).
\end{equation}
Since $t<(k+1)\alpha^{-1}s$, one has $k>\alpha t/s-1$, and therefore
\begin{equation}
\label{eq:tail-parameter}
 \mathbb P(\{\omega\in A:\ |f(\omega)-f_A|>t\})
 \le \frac{s}{B}\exp\!\left(-\frac{\alpha t}{s}\log\frac{s}{B}\right)\mathbb P(A).
\end{equation}
For $0<t<\alpha^{-1}s$, the trivial estimate by $\mathbb P(A)$ is still dominated by the right-hand side of \eqref{eq:tail-parameter}. Finally choose $s=eB$. Then $s/B=e$ and $\log(s/B)=1$, so \eqref{eq:tail-parameter} becomes exactly \eqref{eq:SGJN-tail}.
\end{proof}

\begin{remark}
\label{rem:SGJN-parameter}
The proof yields a one-parameter family of tail bounds. If one chooses $s=u\|f\|_{\mathrm{BMO}}$ with any $u>1$ in \eqref{eq:tail-parameter}, then
\[
 \mathbb P(\{\omega\in A:\ |f(\omega)-f_A|>t\})
 \le u\exp\!\left(-\frac{\alpha\log u}{u}\frac{t}{\|f\|_{\mathrm{BMO}}}\right)\mathbb P(A).
\]
The factor $\alpha\log u/u$ is maximized at $u=e$, which recovers Theorem~\ref{thm:SGJN}.
\end{remark}

\begin{corollary}
\label{cor:exp-int}
Assume the hypotheses of Theorem~\ref{thm:SGJN}. For every $N\ge1$, every atom $A\in\mathcal A_N$, and every $\beta\in(0,\alpha/e)$,
\begin{equation}
\label{eq:exp-int}
 \frac1{\mathbb P(A)}\,\mathbb E\left[\exp\!\left(\frac{\beta |f-f_A|}{\|f\|_{\mathrm{BMO}}}\right)\mathbf1_A\right]
 \le 1+\frac{e^2\beta}{\alpha-e\beta}.
\end{equation}
\end{corollary}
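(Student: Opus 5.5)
We derive the corollary from the tail bound \eqref{eq:SGJN-tail} of Theorem~\ref{thm:SGJN} via the layer-cake formula. Fix $A\in\mathcal A_N$, write $B:=\|f\|_{\mathrm{BMO}}$ (the case $B=0$ is trivial, with the convention that the integrand is $1$), and set $X:=|f-f_A|/B$ on $A$. For a nonnegative random variable and $\beta>0$,
\[
 \mathbb E\bigl[e^{\beta X}\mathbf1_A\bigr]
 =\mathbb P(A)+\int_0^\infty \beta e^{\beta u}\,\mathbb P\bigl(\{\omega\in A:\ X(\omega)>u\}\bigr)\,du .
\]
This is Tonelli applied to $e^{\beta X}-1=\int_0^X\beta e^{\beta u}\,du$.

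Next we insert \eqref{eq:SGJN-tail} with $t=uB$. This gives $\mathbb P(\{X>u\}\cap A)\le e\,e^{-(\alpha/e)u}\,\mathbb P(A)$. The integral is therefore at most
\[
 e\beta\,\mathbb P(A)\int_0^\infty e^{-(\alpha/e-\beta)u}\,du
 =\frac{e\beta}{\alpha/e-\beta}\,\mathbb P(A)
 =\frac{e^2\beta}{\alpha-e\beta}\,\mathbb P(A),
\]
and the integral converges precisely because $\beta<\alpha/e$. Dividing by $\mathbb P(A)$ yields \eqref{eq:exp-int}.

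There is no real obstacle; the only points to check are these:
\begin{itemize}
 \item the measurability and Tonelli justification of the layer-cake identity for the nonnegative integrand;
 \item that the tail bound is used for all $t>0$, which Theorem~\ref{thm:SGJN} provides;
 \item that the strict inequality $\beta<\alpha/e$ gives convergence of the integral.
\end{itemize}
One could sharpen the constant by using the trivial bound $\mathbb P(A)$ for small $u$, but that is unnecessary for the stated constant.
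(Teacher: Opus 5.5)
Your proposal is correct and follows the paper's proof: the paper also applies the layer-cake identity to $e^{KY}-1$ with $K=\beta/B$ and inserts the tail bound of Theorem~\ref{thm:SGJN}, which gives the same integral and the same constant $\frac{e^2\beta}{\alpha-e\beta}$. Your normalization $u=t/B$ is only a change of variables.
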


\begin{proof}
Write $B:=\|f\|_{\mathrm{BMO}}$. The case $B=0$ is trivial. Assume $B>0$ and set $Y:=|f-f_A|$ on $A$. By Theorem~\ref{thm:SGJN},
\[
 \mathbb P(\{\omega\in A:\ Y>t\})
 \le e\exp\!\left(-\frac{\alpha}{e}\frac{t}{B}\right)\mathbb P(A),
 \qquad t>0.
\]
For $K>0$, the layer-cake identity gives
\[
 \mathbb E\bigl[(e^{K Y}-1)\mathbf1_A\bigr]
 =\int_0^{\infty}K e^{K t}\,\mathbb P(\{\omega\in A:\ Y>t\})\,dt.
\]
Choose $K=\beta/B$. Then
\begin{align*}
 \mathbb E\left[e^{\frac{\beta Y}{B}}\mathbf1_A\right]
 &\le \mathbb P(A)
 +\frac{e\beta}{B}\,\mathbb P(A)
 \int_0^{\infty}\exp\!\left[-\left(\frac{\alpha}{e}-\beta\right)\frac{t}{B}\right]dt\\
 &=\left(1+\frac{e^2\beta}{\alpha-e\beta}\right)\mathbb P(A).
\end{align*}
Dividing by $\mathbb P(A)$ gives \eqref{eq:exp-int}.
\end{proof}

\begin{remark}
In the probabilistic literature one often defines martingale $\mathrm{BMO}$ by a stopping-time norm such as
\[
 \sup_T\Bigl\|\mathbb E\bigl[|M_\infty-M_T|^p\mid\mathcal F_T\bigr]^{1/p}\Bigr\|_\infty<\infty,
\]
and proves conditional exponential integrability for $M_\infty-M_T$; see, for instance, Kazamaki~\cite{Kazamaki1994}. For discrete martingales, Garsia~\cite[Ch.~III]{Garsia1973} established a John--Nirenberg inequality without any regularity assumption on the filtration, using a different $\mathrm{BMO}$ seminorm. The estimate obtained here is of a different kind: it is proved for atomic $\alpha$-regular filtrations and all constants are explicit.
\end{remark}

\end{document}